\newcolumntype{C}{>{\Centering\arraybackslash}X} 
\newtheorem{theorem}{Theorem}[section]
\newtheorem{lemma}[theorem]{Lemma}
\theoremstyle{definition}
\newtheorem{definition}[theorem]{Definition}
\theoremstyle{remark}
\newtheorem{remark}[theorem]{Remark}
\numberwithin{equation}{section}
\DeclareMathOperator{\Id}{Id}
\DeclareMathOperator{\BN}{BN}
\DeclareMathOperator{\Kh}{Kh}
\newcommand{\A}{\alpha}
\newcommand*\circled[1]{\tikz[baseline=(char.base)]{
    \node[shape=circle, draw, inner sep=1pt, 
        minimum height=12pt] (char) {#1};}}
\tikzstyle{knot}=[thick]
\tikzstyle{knott}=[thick,preaction={draw, line width=4pt, white}]
\tikzstyle{crossing}=[circle,fill=white,inner sep=0,outer sep=0,minimum width=3pt]
\tikzstyle{dot}=[circle,fill=black,inner sep=0,outer sep=0,minimum width=4pt]
\newcommand{\wt}{\widetilde}
\tikzset{->-/.style={decoration={markings, mark=at position .5 with {\arrow{>}}},postaction={decorate}}}
\tikzset{-<-/.style={decoration={markings, mark=at position .5 with {\arrow{<}}},postaction={decorate}}}
\begin{document}

	\title{Ribbon distance bounds from Bar-Natan Homology and $\A$-Homology}
	\author{Onkar Singh Gujral}
\address{Kolkata, West Bengal, India}
\email{osgujral@gmail.com}
	\maketitle

\begin{abstract}
We prove a lower bound on the ribbon distance via Bar-Natan Homology. This lower bound agrees with Alishahi's lower bound on the unknotting number via Bar-Natan Homology, which furthers a pattern first observed by Sarkar: Sarkar's lower bound on the ribbon distance via Lee Homology agreed with Alishahi and Dowlin's lower bound on the unknotting number via Lee Homology. We also prove a lower bound on the ribbon distance via $\A$-homology, a variant of Khovanov homology defined recently by Khovanov and Robert.
\end{abstract}

\let\oldemptyset\emptyset
\let\emptyset\varnothing

\newcounter{res}[section]
\numberwithin{res}{section}
\newtheorem{thm}[res]{Theorem}
\newtheorem*{theo}{Theorem}
\newtheorem*{ques}{Question}
\newtheorem*{claim}{Claim}
\newtheorem{lem}[res]{Lemma}
\newtheorem{obs}[res]{Observation}
\newtheorem{prop}[res]{Proposition}
\newtheorem{cor}[res]{Corollary}
\newtheorem{que}[res]{Question}
\theoremstyle{definition}
\newtheorem{notation}[res]{Notation}
\newtheorem{dfn}[res]{Definition}
\newtheorem{rmk}[res]{Remark}
\newtheorem{exa}[res]{Example}
\newtheorem{exo}[res]{Exercise}
\newtheorem{cjc}[res]{Conjecture}
\newcommand{\imagesfolder}{.}
\setlength{\marginparwidth}{1.2in} \let\oldmarginpar\marginpar
\newcommand\marginparL[1]{\oldmarginpar{\color{red}\fbox{\begin{minipage}{3cm}
  \footnotesize #1 \end{minipage}}}}
\newcommand\marginparMK[1]{\oldmarginpar{\color{blue}\fbox{\begin{minipage}{3cm}
  \footnotesize #1 \end{minipage}}}}

\def\co{\colon\thinspace}
\newcommand{\NB}[1]{\ensuremath{\vcenter{\hbox{#1}}}}
\newcommand{\NB{\tikz[]{\input{\imagesfolder/}}}}[2][]{\NB{\tikz[#1]{\input{\imagesfolder/#2}}}}
\newcommand{\NN}{\ensuremath{\mathbb{N}}}
\newcommand{\FF}{\ensuremath{\mathbb{F}}}
\newcommand{\ZZ}{\ensuremath{\mathbb{Z}}}
\newcommand{\CC}{\ensuremath{\mathbb{C}}}
\newcommand{\QQ}{\ensuremath{\mathbb{Q}}}
\newcommand{\RR}{\ensuremath{\mathbb{R}}}
\newcommand{\DD}{\ensuremath{\mathbb{D}}}
\newcommand{\KK}{\ensuremath{\mathbb{K}}}
\newcommand{\PP}{\ensuremath{\mathbb{P}}}
\renewcommand{\SS}{\ensuremath{\mathbb{S}}}
\renewcommand {\Im}{\operatorname{Im}} 
\newcommand{\tr}{\mathop{\mathrm{Tr}}\nolimits}
\newcommand{\Ker}{\mathop{\mathrm{Ker}}\nolimits}
\newcommand{\Hom}{\mathop{\mathrm{Hom}}}
\newcommand{\End}{\mathop{\mathrm{End}}}
\newcommand{\qdim}{\mathop{\mathrm{dim}_q}\nolimits}
\newcommand{\sdim}{\mathop{\mathrm{sdim}}\nolimits}
\newcommand{\gll}{\ensuremath{\mathfrak{gl}}}
\newcommand{\sll}{\ensuremath{\mathfrak{sl}}}
\newcommand{\rk}{\mathrm{rk}}
\newcommand\eqdef{\ensuremath{\stackrel{\textrm{def}}{=}}}

\newcommand\kup[1]{\left\langle #1 \right\rangle}
\newcommand\kupo[1]{\left\langle #1 \right\rangle_{\omega}}
\newcommand\kups[1]{\left\llangle #1 \right\rrangle}

\newcommand{\QvectZ}{\ensuremath{\QQ\textrm{-}\mathsf{vect}_{\ZZ\textrm{-}\mathrm{gr}}}}
\newcommand{\vectgr}{\ensuremath{\mathsf{vect}_{\mathrm{gr}}}}
\newcommand{\svect}{\ensuremath{\mathsf{Svect}}}
\newcommand{\vect}{\ensuremath{\mathsf{vect}}}
\newcommand{\svectgr}{\ensuremath{\mathsf{Svect}_{\mathrm{gr}}}}
\newcommand{\Foam}{\ensuremath{\mathsf{Foam}}}
\newcommand{\qvg}{\ensuremath{\QQ\mathsf{-vect}_{\mathrm{gr}}}}
\newcommand{\blmoy}{\ensuremath{\mathsf{blMOY}}}
\newcommand{\Qalggr}{\ensuremath{\QQ\mathrm{-}\mathsf{Alg}_{\mathrm{gr}}}}

\newcommand{\longto}{\ensuremath{\longrightarrow}} 

\newcommand{\qbina}[2]{\ensuremath
\begin{bmatrix}
  #1 \\
  #2
\end{bmatrix}
} \newcommand{\qbinil}[2]{\ensuremath \left[\begin{smallmatrix}
    #1 \\
    #2
\end{smallmatrix} \right]
} \newcommand{\qbin}[2]{\ensuremath
\begin{bmatrix}
  #1 + #2 \\
  #1 \quad #2
\end{bmatrix}
} \newcommand{\qbinb}[3]{\ensuremath
\begin{bmatrix}
  #1 \\
  #2 \quad #3
\end{bmatrix}
} \newcommand{\trinomial}[4]{\ensuremath{
\begin{pmatrix}
      #1 \\ #2\quad #3 \quad #4
   \end{pmatrix}
    }}

\newcommand{\Xing}{\NB{\scalebox{1.3}{\ensuremath{\times}}}}

\newcommand{\col}[2][{}]{\ensuremath{\mathrm{col}_{#1}(#2)}} 
\newcommand{\R}{\ensuremath{R}} 
\newcommand{\Tait}{\mathrm{Tait}}
\newcommand{\cone}{\mathrm{Cone}}
\newcommand{\Z}{\mathbb{Z}}
\newcommand{\Gd}{\ensuremath{G_d}}
\newcommand{\lra}{\longrightarrow}
\newcommand{\kk}{\mathbf{k}}
\newcommand{\mcD}{\mathcal{D}}
\newcommand{\mcF}{\mathcal{F}}
\newcommand{\mcZ}{\mathcal{Z}}
\newcommand{\Fg}{\mathsf{Fg}}
\newcommand{\adm}{\mathrm{adm}}
\newcommand{\Cob}{\mathsf{Cob}}
\newcommand{\undeps}{\underline{\epsilon}}
\newcommand{\undell}{\underline{\ell}}

\newcommand{\mtH}{\mathrm{H}}
\newcommand{\Smod}{\ensuremath{S\textrm{-}\mathsf{mod}}}
\newcommand{\leftsquigarrow}{\ensuremath{\rotatebox[origin=c]{180}{\NB{$\rightsquigarrow$}}}}

\newcommand{\SeSu}{\ensuremath{\mathsf{SeSu}}}
\newcommand{\BSeSu}{\ensuremath{\mathsf{BSeSu}}}
\newcommand{\CSeSu}{\ensuremath{\mathsf{CSeSu}}}
\newcommand{\CCSeSu}{\ensuremath{\mathsf{Cob}^{1+1}_{\RR^3}}}

\newcommand{\cfCircleSplit}[3]{\NB{\tikz[scale=#1]{
\begin{scope}
  \begin{scope}[yshift= 1.5cm]
    \draw (1,0) arc (0:360:1cm and 0.3cm);
    \draw[thin] (1,0) arc (0:-180:1cm and 1.2cm);
    \node at (0, -0.75) {$#2$};
  \end{scope}
  \begin{scope}[yshift =-1.5cm]
    \draw (1,0) arc (0:-180:1cm and 0.3cm); 
    \draw[densely dotted] (1,0) arc (0:180:1cm and 0.3cm); 
    \draw (1,0) arc (0:180:1cm and 1.2cm);
    \node at (0, 0.75) {$#3$};
  \end{scope}
\end{scope}}}}

\newcommand{\cfCircleId}[2][]{\NB{\tikz[scale=#2]{
\begin{scope}
  \begin{scope}[yshift= 1.5cm]
    \draw (1,0) arc (0:360:1cm and 0.3cm)coordinate [pos=0](e) coordinate [pos=0.5](f);
  \end{scope}
  \node at (0,0) {$#1$};
  \begin{scope}[yshift =-1.5cm]
    \draw (1,0) arc (0:-180:1cm and 0.3cm)coordinate [pos=0](g) coordinate [pos=1](h);
    \draw[densely dotted] (1,0) arc (0:180:1cm and 0.3cm); 
  \end{scope}
  \draw (e) -- (g);
  \draw (f) -- (h); 
\end{scope}}}}

\newcommand{\cfDecSquare}[2]{\NB{\tikz[scale=#1]{
\begin{scope}
  \node at (0,0) {$#2$};
  \draw (-1, -1) -- (-1,1) -- (1,1)  -- (1, -1) -- (-1,-1);
\end{scope}}}}

\newcommand{\cfDoubleSeamA}[1]{\NB{\tikz[scale=#1]{
\begin{scope}[decoration={border,segment length=1mm,amplitude=#1mm,angle=90}]
  \draw (-1, -1) -- (-1,1) -- (1,1)  -- (1, -1) -- (-1,-1);
  \draw[red, postaction={draw, decorate}] (-1, 0) .. controls +
  (0.5,0) and  +(0,0.5) .. (0, -1);
  \draw[red, postaction={draw, decorate}] ( 1, 0) .. controls +
  (-0.5,0) and  +(0,-0.5) .. (0,  1);
\end{scope}}}}

\newcommand{\cfDoubleSeamB}[1]{\NB{\tikz[scale=#1]{
\begin{scope}[decoration={border,segment length=1mm,amplitude=#1mm,angle=90}]
  \draw (-1, -1) -- (-1,1) -- (1,1)  -- (1, -1) -- (-1,-1);
  \draw[red, postaction={draw, decorate}] (1, 0) .. controls +
  (-0.5,0) and  +(0,0.5) .. (0, -1);
  \draw[red, postaction={draw, decorate}] (-1, 0) .. controls +
  (0.5,0) and  +(0,-0.5)  .. (0,  1);
\end{scope}}}}

\newcommand{\cfSeamSquareTwo}[3]{\NB{\tikz[rotate= -90, scale=#1]{
\begin{scope}[decoration={border,segment length=1mm,amplitude=#1mm,angle=90}]
  \node at (-.5,0) {$#2$};
  \node at (0.5,0) {$#3$};
  \draw (-1, -1) -- (-1,1) -- (1,1)  -- (1, -1) -- (-1,-1);
  \draw[red, postaction={draw, decorate}] (0,-1) -- (0, 1);
\end{scope}}}}

\newcommand{\cfSeamSquare}[3]{\NB{\tikz[scale=#1]{
\begin{scope}[decoration={border,segment length=1mm,amplitude=#1mm,angle=90}]
  \node at (-.5,0) {$#2$};
  \node at (0.5,0) {$#3$};
  \draw (-1, -1) -- (-1,1) -- (1,1)  -- (1, -1) -- (-1,-1);
  \draw[red, postaction={draw, decorate}] (0,1) -- (0, -1);
\end{scope}}}}

\newcommand{\cfGenusTwoBdy}[1]{\NB{\tikz[scale=#1]{
\begin{scope}
  \draw (0,0) circle (1cm and 0.3cm);
  \draw[very thin] (-1, 0)
  .. controls +(0,-0.3) and +(0,1) .. (-3, -2)
  .. controls +(0,-2) and +(-1,0) .. (0, -2)
  .. controls +(1, 0) and +(0,-2) .. (3, -2)
  .. controls +(0, 1) and +(0,-0.3) .. (1, 0);
  \draw[very thin] (-1.1, -1) arc (340: 280: 1.5) coordinate [pos =0.9]
  (a);
  \draw[very thin] (a) arc (154: 106: 1.5);
  \draw[very thin] (1.1, -1) arc (200: 260: 1.5) coordinate [pos =0.9]
  (a);
  \draw[very thin] (a) arc ( 26: 74: 1.5);
\end{scope}
}}}

\newcommand{\cfGenusTwoCup}[2]{\NB{\tikz[scale=#1]{
\begin{scope}
  \draw (0,0) circle (1cm and 0.3cm);
  \draw[very thin] (1,0) arc (360:180:1cm and 1cm);
  \node at (0, -2) {$#2$}; 
  \draw[very thin](-3, -2)
  .. controls +(0,-2) and +(-1,0) .. (0, -2.5)
  .. controls +(1, 0) and +(0,-2) .. (3, -2)
  .. controls +(0,2) and +(1,0) .. (0, -1.5)
  .. controls +(-1, 0) and +(0,2) .. (-3, -2);
  \draw[very thin] (-1, -2) arc (300: 240:1.5) coordinate[pos=0.85]
  (a);
  \draw[very thin] (a) arc (111: 69:1.5);
  \draw[very thin] ( 1, -2) arc (240: 300:1.5) coordinate[pos=0.85]
  (b);
    \draw[very thin] (b) arc (69: 111:1.5);
\end{scope}
}}}

\newcommand{\cfCup}[2]{\NB{\tikz[scale=#1]{
\begin{scope}
  \draw (0,0) circle (1cm and 0.3cm);
  \draw[very thin] (1,0) arc (360:180:1cm and 1.2cm);  
  \node at (0, -0.75) {$#2$};
\end{scope}
}}}

\newcommand{\cfCap}[2]{\NB{\tikz[scale=#1]{
\begin{scope}
  \draw (0,1) arc (0:-180: 1cm and 0.3cm);
  \draw[densely dotted] (0,1) arc (0:180: 1cm and 0.3cm);
  \draw[very thin] (0,1) arc (0:180:1cm and 1.2cm);  
  \node at (0, 0.75) {$#2$};
\end{scope}
}}}

\newcommand{\cfSphere}[2]{\NB{\tikz[scale=#1]{
\begin{scope}
  \draw[very thin] (1,0) arc (0:-180: 1cm and 0.3cm);
  \draw[very thin, densely dotted] (1,0) arc (0:180: 1cm and 0.3cm);
  \draw[very thin] (1,0) arc (360:0:1cm and 1cm);  
  \node at (0, -0.65) {$#2$};
\end{scope}
}}}

\newcommand{\cfSphereSpecial}[3]{\NB{\tikz[scale=#1]{
\begin{scope}
  \draw[very thin] (1,0) arc (0:-180: 1cm and 0.3cm);
  \draw[very thin, densely dotted] (1,0) arc (0:180: 1cm and 0.3cm);
  \draw[very thin] (1,0) arc (360:0:1cm and 1cm);  
  \node at (0, -0.65) {$#2$};
  \node at (0, 0.65) {$#3$};
\end{scope}
}}}

\newcommand{\cfGenusOneBdy}[1]{\NB{\tikz[scale=#1]{
\begin{scope}
  \draw (0,0) circle (1cm and 0.3cm);
  \draw[very thin] (-1, 0)
  .. controls +(0,-0.3) and +(0,1) .. (-2, -2)
  .. controls +(0,-1.5) and +(0,-1.5) .. (2, -2)
  .. controls +(0, 1) and +(0,-0.3) .. (1, 0);
  \draw[very thin] (0, -1.8) arc (270: 300: 1.8) coordinate [pos =0.8]
  (a);
  \draw[very thin] (0, -1.8) arc (270: 240: 1.8);
  \draw[very thin] (a) arc (66: 114: 1.8);
\end{scope}
}}}

\newcommand{\cfDelta}[2]{\NB{\tikz[scale=#1]{
  \begin{scope}
    \draw[densely dotted] (1, -1) arc (0:180:1cm and 0.3cm);
    \draw (1, -1) arc (0:-180:1cm and 0.3cm);
    \draw (-1, 1) arc (0:360:1cm and 0.3cm);
    \draw (3, 1) arc (0:360:1cm and 0.3cm);
    \draw[very thin] (1,  1) .. controls +(0,-0.5) and + (0, -0.5) .. +(-2,0) node [midway, below] {$#2$};
    \draw[very thin] (-1,  -1) .. controls +(0,0.3) and + (0, -0.3) .. +(-2,2);
    \draw[very thin] (1,  -1) .. controls +(0,0.3) and + (0, -0.3) .. +(2,2);
  \end{scope}
}}}

\newcommand{\cfMu}[2]{\NB{\tikz[scale=#1]{
  \begin{scope}
    \draw[densely dotted] (3, -1) arc (0:180:1cm and 0.3cm);
    \draw (3, -1) arc (0:-180:1cm and 0.3cm);
    \draw[densely dotted] (-1, -1) arc (0:180:1cm and 0.3cm);
    \draw (-1, -1) arc (0:-180:1cm and 0.3cm);
    \draw (1, 1) arc (0:360:1cm and 0.3cm);
    \draw[very thin] (1,  -1) .. controls +(0,0.5) and + (0, 0.5) .. +(-2,0) node [midway, below] {$#2$};
    \draw[very thin] (-3,  -1) .. controls +(0,0.3) and + (0, -0.3) .. +(2,2);
    \draw[very thin] (3,  -1) .. controls +(0,0.3) and + (0, -0.3) .. +(-2,2);
  \end{scope}
}}}

\newcommand{\cfGenusThree}[1]{\NB{\tikz[scale=#1]{
\begin{scope}
  \draw (270:1) .. controls +(0:1) and +(240:1)  .. (330:3);
  \draw (30:1) .. controls +(-60:1) and +(60:1)  .. (330:3);
  \draw (333:1)  arc (210:270:1.3cm) coordinate[pos=0.9] (a);
  \draw (a) arc (36: 84: 1.3cm);
\end{scope}
\begin{scope}[rotate=120]
  \draw (270:1) .. controls +(0:1) and +(240:1)  .. (330:3);
  \draw (30:1) .. controls +(-60:1) and +(60:1)  .. (330:3);
  \draw (333:1)  arc (210:270:1.3cm) coordinate[pos=0.9] (a);
  \draw (a) arc (36: 84: 1.3cm);
\end{scope}
\begin{scope}[rotate=-120]
  \draw (270:1) .. controls +(0:1) and +(240:1)  .. (330:3);
  \draw (30:1) .. controls +(-60:1) and +(60:1)  .. (330:3);
  \draw (333:1)  arc (210:270:1.3cm) coordinate[pos=0.9] (a);
  \draw (a) arc (36: 84: 1.3cm);
\end{scope}}}}

\newcommand{\cfTubeTT}[2]{\NB{\tikz[scale=#1]{
  \begin{scope}
    \draw (-1, 1) arc (0:360:1cm and 0.3cm);
    \draw (3, 1) arc (0:360:1cm and 0.3cm);
    \draw[very thin] (1,  1) .. controls +(0,-0.7) and + (0, -0.7) .. +(-2,0) node [midway, below] {$#2$};
    \draw[very thin] (-3,  1) .. controls +(0,-2.1) and + (0, -2.1) .. (3,1);
  \end{scope}
}}}

\newcommand{\circlein}[1]{\NB{\tikz[scale=#1]{
  \begin{scope}[scale =#1, decoration={border,segment length=1mm,amplitude=#1mm,angle=90}]
    \draw[red, postaction={draw, decorate}] (0,0) circle (0.6cm);
  \end{scope}
  }}}

\newcommand{\circleout}[1]{\NB{\tikz[scale=#1]{
  \begin{scope}[scale =#1, decoration={border,segment length=1mm,amplitude=#1mm,angle=-90}]
    \draw[red, postaction={draw, decorate}] (0,0) circle (0.6cm);
  \end{scope}
  }}}

\newcommand{\warning}[1]{\NB{\tikz[scale=#1]{
\node[scale=#1, font=\normalsize] at (0,0.05) {$\mathbf{!}$};
\draw (-30:0.35) --(90:0.35) -- (210:0.35) -- cycle;
}}}

\newcommand{\feSeamedCup}[2]{\NB{\tikz[scale=#1]{
\begin{scope}[decoration={border,segment length=#1mm,amplitude=#1mm,angle=-90}]
  \begin{scope}
    \draw (1,0) arc (0:360:1cm and 0.3cm)coordinate [pos=0](e)
    coordinate [pos=0.5](f) coordinate[pos=0.65] (bt)
    coordinate[pos=0.85] (at);
    \draw[thin] (1,0) arc (0:-180:1cm and 1.2cm);
    \node at (0,-0.5) {$#2$};
  \end{scope}
  \draw[red, postaction={draw, decorate}] (at) .. controls +(0, -1)  and +(0, -1) .. (bt);
\end{scope}
}}}

\newcommand{\feSeamedTwistedTube}[3]{\NB{\tikz[scale=#1]{
\begin{scope}
\begin{scope}  [decoration={border,segment length=#1mm,amplitude=#1mm,angle=-90}]
  \begin{scope}[xshift=0cm]
    \draw (1,0) arc (0:360:1cm and 0.3cm)coordinate [pos=0](e)
    coordinate [pos=0.5](f) coordinate[pos=0.65] (bl)
    coordinate[pos=0.85] (al);
    \node at (0,-0.5) {$#3$};
    \draw[very thin] (-1,0) arc (180:270:1cm and 1.2cm) coordinate[pos=1] (LB);
  \end{scope}
  \begin{scope}[xshift=#2cm]
    \begin{scope}[xshift=2cm]
    \draw (1,0) arc (0:360:1cm and 0.3cm)coordinate [pos=1](f)
    coordinate [pos=0.5](f) coordinate[pos=0.65] (br)
    coordinate[pos=0.85] (ar);
    \draw[very thin] (1,0) arc (0:-90:1cm and 1.2cm) coordinate[pos=1]
    (RB);
  \end{scope}
  \draw[very thin] (LB) -- (RB);
  \draw[very thin] (f) .. controls +(0, -0.4) and +(0, -0.4) .. (e);
  \draw[red, postaction={draw, decorate}] (al) .. controls +(0, -0.6)  and +(0, -0.6) .. (br);
  \draw[red, postaction={draw, decorate}] (ar) .. controls +(0, -1)  and +(0, -1) .. (bl);
\end{scope}
\end{scope}
\end{scope}
}}}

\newcommand{\feMarkedPoint}[1]{\NB{\tikz[scale=#1]{
  \begin{scope}       \draw (0,0) --  +(0.5,0);
   \draw [red, -latex, thick] (0.25, 0) -- ( 0.35,0);
  \end{scope}
}}}

\newcommand{\feOrientation}[1]{\NB{\tikz[scale=#1]{
  \begin{scope}       \draw (0,0) --  +(0.5,0);
   \draw [->] (0.25, 0) -- ( 0.27,0);
  \end{scope}
}}}

\section{Introduction}

Sucharit Sarkar in \cite{Sarkar_2020} defined a notion of ribbon distance between two knots and used the $X$-torsion order to give a lower bound on this quantity via Lee Homology. Then Juhász, Miller and Zemke in \cite{juh2020knot} worked on related ideas in the context of knot Floer Homology. In particular, they proved a lower bound on the ribbon distance via knot Floer Homology and furnished examples of ribbon knots whose ribbon distance from the unknot is arbitrarily large. In the knot Floer setting, they also proved a general inequality that relates the torsion order of two knots that are cobordant via a connected cobordism, the number of local maxima and minima, and the genus of the cobordism. They further apply their result towards a lower bound on the bridge index of a knot, as well as numerous other bounds.

On the other hand, before Sarkar's work, Akram Alishahi in \cite{Alishahi_2019} obtained a lower bound on the unknotting number via Bar-Natan (characteristic $2$) Homology. At about the same time Akram Alishahi and Nathan Dowlin in \cite{alishahi2017lee} obtained bounds on the unknotting number via Lee Homology. Interestingly, this lower bound coincides with Sarkar's lower bound on the ribbon distance via Lee Homology. It is thus natural to ask whether there exists a lower bound on the ribbon distance via Bar-Natan Homology, and whether this lower bound agrees with \cite{Alishahi_2019}'s lower bound on the unknotting number from Bar-Natan Homology. This is what we prove in this paper. Additionally, we prove a lower bound on the ribbon distance via $\A$-homology, a link invariant defined recently by Khovanov and Robert in \cite{khovanov2020link}.

We are extremely grateful for the ideas in \cite{Sarkar_2020}. Our proof will mirror \cite{Sarkar_2020}'s ideas, with some modifications to accommodate differences in the skein relations (in particular, the neck-cutting relation). Unlike in Lee Homology, a dot cannot be moved around freely within a connected component of a cobordism (even up to a sign) in Bar-Natan Homology or $\A$-homology. To overcome this, we introduce ``Symmetry Lemmas" (Theorems \ref{mirror} and \ref{mirror1}) that appeal to the symmetry of dotted cobordisms appearing in the argument. Eventually the bound we obtain on the ribbon distance from the unknot via Bar-Natan Homology will coincide with the bound that \cite{Alishahi_2019} achieved on the unknotting number from Bar-Natan Homology. This reinforces the similarity between attaining bounds on the ribbon distance and unknotting number via perturbations of Khovanov Homology, a pattern that was first seen in \cite{Sarkar_2020} for the case of Lee Homology.

The notion of the ribbon distance between two knots was first defined by Sarkar in \cite{Sarkar_2020}. It is defined as follows.

\begin{definition}
Given two knots $K$ and $K'$, the \emph{ribbon distance} between $K$ and $K'$, written $d(K,K')$, is the smallest choice of $k$ for which there exists a sequence of knots $K=K_1,...,K_n=K'$ with there being a ribbon concordance between each $K_i$ and $K_{i+1}$ in any direction with no more than $k$ saddles.
\end{definition}

As discussed in \cite{Sarkar_2020}, the ribbon distance satisfies the triangle inequality and it is finite only when the two knots are concordant to one another.

\subsection{Bar-Natan Homology} Let $F_6$ be the Frobenius system in \cite{khovanov2004link} given by $R_6=\mathbb{F}_2[h]$ and $A_6=R_6[X]/(X^2-hX)$, where $\deg(h)=2$. Here $\mathbb{F}_2$ is taken to be the field $\{0,1\}$, 
and the following holds: $\epsilon(1)=0, \epsilon(X)=1$ and $\Delta(1)=1\otimes X + X\otimes 1 + h1\otimes 1$, $\Delta(X)=X\otimes X$. The link homology theory resulting from $F_6$ is called Bar-Natan's charactertisc $2$ theory or Bar-Natan Homology, and we will abbreviate it to $\BN$. This is a functorial link homology theory. For a link diagram $L$, the Bar-Natan Homology of $L$ will be denoted by $\BN(L)$. In order to describe the lower bound on the ribbon distance of a knot coming from Bar-Natan Homology, we need to define some terminology.

\begin{definition}
\label{defntor}
Let $L$ be a knot. Following \cite{Alishahi_2019}, we declare $\gamma \in \BN(L)$ to be \emph{$h$-torsion} if $h^n \gamma=0$ for some $n>0$. The \emph{order} of $\gamma$ is the smallest choice of $n$ for which $h^n \gamma=0$. 
\end{definition}

\begin{definition}
For a knot $L$, we define the \emph{$h$-torsion order} of the Bar-Natan Homology of $L$, written $\mu(L)$, to be the maximum order of a $h$-torsion homology class in $\BN(L)$. 
\end{definition}

The following are the main results of this paper in the $\BN$-setting. 
\begin{theorem}
\label{side}
If $d$ is the ribbon distance between knots $K$ and $K'$, then $h^d\BN(K)\cong h^d\BN(K')$.
\end{theorem}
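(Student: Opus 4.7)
The plan is to adapt the Lee-homology strategy of \cite{Sarkar_2020} to the $\BN$ setting, replacing Lee's neck-cutting relation with the $\BN$ one and invoking the Symmetry Lemmas (Theorems~\ref{mirror} and~\ref{mirror1}) to handle the fact that dots cannot be freely slid along a component in $\BN$. First I reduce to a single-step claim. By definition of ribbon distance there is a chain $K = K_1, \dots, K_n = K'$ with ribbon concordances of at most $d$ saddles between consecutive knots, and a single-step isomorphism $h^s \BN(K_i) \cong h^s \BN(K_{i+1})$ for $s \leq d$ yields $h^d \BN(K_i) \cong h^d \BN(K_{i+1})$ after multiplying by $h^{d-s}$ (the isomorphisms are $\FF_2[h]$-linear). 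Composing along the chain, it suffices to prove: for a ribbon concordance $C$ between $K$ and $K'$ with $s$ saddles, $h^s \BN(K) \cong h^s \BN(K')$.

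Given such a $C$, let $F_C : \BN(K) \to \BN(K')$ and $F_{\bar C} : \BN(K') \to \BN(K)$ be the $\FF_2[h]$-linear chain maps induced by $C$ and by its vertical reverse $\bar C$. The core of the argument is to compute $F_{\bar C} \circ F_C$ via Bar-Natan skein relations and show that, after restriction to the $h^s$-images, $F_C$ and $F_{\bar C}$ become mutually inverse. Morse-theoretically, $\bar C \circ C : K \to K$ consists of $s$ births, $2s$ saddles (each saddle of $C$ paired with its mirror in $\bar C$), and $s$ deaths. Each matched pair of saddles forms a tube, and applying the $\BN$ neck-cutting
\[
(\text{tube}) = (1 \otimes X) + (X \otimes 1) + h\,(1 \otimes 1)
\]
to each of the $s$ tubes expresses $F_{\bar C} F_C$ as a sum of $3^s$ dotted cobordisms. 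Evaluating the dotted $2$-spheres produced by the birth/death caps via $\epsilon(1)=0$ and $\epsilon(X)=1$ kills most of the terms, and the contribution in which every tube selects its $h$-summand produces precisely $h^s \Id_{\BN(K)}$.

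The main obstacle is controlling the remaining terms, in which dots from the neck-cuts end up on the $K \times [0,1]$ portion of the doubled cobordism. In Lee homology these terms are benign because $X^2 = 1$ permits dots to be slid within a component freely; in $\BN$ the relation is only $X^2 = hX$, so moving a dot costs a factor of $h$ and the terms are not immediately zero or proportional to $h^s \Id$. This is exactly where the Symmetry Lemmas enter: using the built-in reflective symmetry between $C$ and $\bar C$ in the doubled cobordism, one pairs each troublesome dotted cobordism with its mirror counterpart and shows that each pair either cancels, combines into a further $h^s \Id$ contribution, or contributes a map whose image on $\BN(K)$ is annihilated by $h^s$. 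Running the same analysis for $F_C \circ F_{\bar C}$ and restricting to the $h^s$-images upgrades the resulting weak-inverse identities into genuine mutually inverse isomorphisms $F_C: h^s \BN(K) \to h^s \BN(K')$ and $F_{\bar C}: h^s \BN(K') \to h^s \BN(K)$, proving the theorem. The main difficulty throughout is the bookkeeping in the neck-cutting expansion, which is precisely where the Symmetry Lemmas are brought to bear.
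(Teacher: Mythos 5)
There is a genuine gap in the central computation. You propose to evaluate $F_{\bar C}\circ F_C$ by neck-cutting the composed cobordism $\bar C\circ C$ directly and claim that the all-$h$ term yields $h^s\Id_{\BN(K)}$, with the rest killed by evaluating dotted $2$-spheres. But $\bar C\circ C$ already contains the $s$ birth disks and $s$ death disks, so there is nothing useful to neck-cut: cutting the $U_i$-cylinders there only produces spheres that reconstitute the original picture, and you simply get $\BN(\bar C\circ C)$ back (the dotted-sphere evaluations contribute a coefficient $1$, not $h^s$). Moreover, $\BN(\bar C\circ C)$ is \emph{not} $h^s\Id$ in general — for a trivial ribbon move it equals $\Id$. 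What the paper actually does is introduce an auxiliary cobordism $D$, obtained from $\bar C\circ C$ by deleting the interior deaths and births, so that the $U_i$-portions become honest identity cylinders $\Id_{U^d}$ sitting inside $D$. It then computes $\BN(D)$ in \emph{two} ways: (1) by neck-cutting those $U_i$-cylinders, with the Symmetry Lemma~\ref{mirror} pairing up and cancelling all dotted terms over $\FF_2$, leaving only the undotted contribution $h^d(C'\circ C)$; and (2) by using isotopy invariance plus Theorem~\ref{Al2.3} (a splitting saddle followed by its reverse induces multiplication by $h$) to conclude $\BN(D)=h^d\Id_{\BN(K)}$ directly, since $D$ collapses to $d$ split-saddles followed by their reverses. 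Equating the two gives the correct intermediate statement $h^d\BN(C'\circ C)=h^d\Id$ (Lemma~\ref{main}), which is weaker than your claimed $F_{\bar C}F_C = h^s\Id$. Your proposal contains neither the auxiliary cobordism $D$ nor Theorem~\ref{Al2.3}, which is precisely the ingredient that produces the factor $h^d$ on the other side of the equality, so the argument as written does not go through.

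Your reduction to a single ribbon concordance and the final "restrict to $h^s$-images" step are fine provided you use the correct (weaker) form $h^s F_{\bar C}F_C = h^s\Id$ rather than $F_{\bar C}F_C = h^s\Id$: on $h^s\BN(K)$ the former already gives $F_{\bar C}F_C = \Id$, and combined with the analogous statement for $F_C F_{\bar C}$ (which the paper obtains from Theorem~\ref{inj}, the injectivity/identity theorem for ribbon concordance maps) this upgrades to mutual inverses. But that final step is not where the difficulty lies; the missing two-way computation of $\BN(D)$ is.
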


\begin{theorem}
\label{bound}
If $d$ is the ribbon distance between knots $K$ and $K'$, then $|\mu(K)-\mu(K')|\leq d$.
\end{theorem}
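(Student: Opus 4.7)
The plan is to deduce Theorem \ref{bound} directly from Theorem \ref{side} by tracking how multiplication by $h^d$ shifts $h$-torsion orders. The underlying observation is simple: if $\alpha \in \BN(K)$ has $h$-torsion order $n > d$, then $h^d\alpha$ is a nonzero element of the submodule $h^d\BN(K)$ with $h$-torsion order exactly $n - d$. Combined with the isomorphism supplied by Theorem \ref{side}, this essentially forces the two maximum torsion orders to agree up to $d$.

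Concretely, I would assume without loss of generality that $\mu(K) \geq \mu(K')$, so that the bound reduces to $\mu(K) - \mu(K') \leq d$. If $\mu(K) \leq d$ the inequality is immediate, so I would focus on the case $\mu(K) > d$ and pick a class $\alpha \in \BN(K)$ realizing the maximum order, i.e., $h^{\mu(K)-1}\alpha \neq 0$ and $h^{\mu(K)}\alpha = 0$. Set $\beta = h^d\alpha \in h^d\BN(K)$. Then $h^{\mu(K)-d-1}\beta \neq 0$ while $h^{\mu(K)-d}\beta = 0$, so $\beta$ has $h$-torsion order exactly $\mu(K) - d$, both as an element of the ambient module $\BN(K)$ and as an element of the submodule $h^d\BN(K)$.

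Next I would transport $\beta$ across the isomorphism $\phi\colon h^d\BN(K) \xrightarrow{\cong} h^d\BN(K')$ of Theorem \ref{side}. Since $\phi$ is an isomorphism of $\FF_2[h]$-modules, $\phi(\beta) \in h^d\BN(K') \subseteq \BN(K')$ still has $h$-torsion order $\mu(K)-d$ in $\BN(K')$, which by definition of $\mu(K')$ gives $\mu(K)-d \leq \mu(K')$. Symmetry between $K$ and $K'$ then yields $|\mu(K)-\mu(K')| \leq d$.

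The one point I would check carefully is that the isomorphism in Theorem \ref{side} really is $\FF_2[h]$-linear rather than just $\FF_2$-linear, since my argument collapses without this. I expect this to be automatic because the isomorphism is assembled from cobordism-induced maps in BN theory, all of which commute with the $h$-action; but confirming this when unpacking the proof of Theorem \ref{side} is the main thing to verify before the deduction above goes through.
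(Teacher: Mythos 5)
Your argument is correct, and it takes a genuinely different route from the paper. The paper proves Theorem \ref{bound} by first establishing a chain-level statement (Theorem \ref{chain}: there exist chain maps $u,v$ between Bar-Natan complexes with $u\circ v$ and $v\circ u$ inducing multiplication by $h^d$ on homology) and then invoking Lemma \ref{ext}, which is the specialization of \cite[Lemma 3.1]{Alishahi_2019} to this situation. Your proposal bypasses both of these and deduces the torsion bound directly from Theorem \ref{side} by tracking torsion orders under multiplication by $h^d$ and transport along the isomorphism $\phi$.

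The one caveat you raise is exactly the right thing to verify, and it does check out: the isomorphism $h^d\BN(K)\cong h^d\BN(K')$ in Theorem \ref{side} is, in the single-ribbon-concordance case, the restriction of the cobordism-induced map $\BN(C')\colon\BN(K')\to\BN(K)$ (which is injective by Theorem \ref{inj}, and which takes $h^d\BN(K')$ onto $h^d\BN(K)$ by the identity $\BN(h^d\,C'\circ C)=h^d\operatorname{Id}_{\BN(K)}$ from Lemma \ref{main}); cobordism-induced maps in Bar-Natan theory are $\FF_2[h]$-linear, so $\phi$ is an $\FF_2[h]$-module isomorphism, and the general case follows by composing such isomorphisms along the chain of knots realizing the ribbon distance. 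With that confirmed, your computation --- $\beta=h^d\alpha$ has order exactly $\mu(K)-d$ when $\mu(K)>d$, and $\phi$ preserves torsion orders --- is airtight. In fact, your argument yields slightly more than Theorem \ref{bound}: the $\FF_2[h]$-module isomorphism forces $\max(\mu(K)-d,0)=\max(\mu(K')-d,0)$, so when both torsion orders exceed $d$ they must be equal. The paper's route via Theorem \ref{chain} and Lemma \ref{ext} has the virtue of packaging the chain-level data in a form that matches Alishahi's framework and is reusable for other torsion-order bounds, but for the purpose of proving Theorem \ref{bound} alone your direct homology-level deduction is cleaner.
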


This is used to give a lower bound on the ribbon distance of a knot from the unknot via Bar-Natan Homology. We note that the $h$-torsion order of the unknot is $0$. Thus Theorem \ref{bound} implies that if $d$ is the ribbon distance between a knot $K$ and the unknot, then $\mu(K)\leq d$. This agrees with the lower bound achieved by \cite[Theorem 1.2]{Alishahi_2019} on the unknotting number of a knot $K$.

\subsection{$\A$-homology} 
\label{Adefn}
$\A$-homology or $\A$-theory is an invariant of links defined recently by Khovanov and Robert in \cite{khovanov2020link} and also discussed in \cite{sano2020fixing}. We briefly recall their definition. Let $R_{\A}=\mathbb{Z}[\alpha_1,\alpha_2]$, with $\deg(\alpha_1)=\deg(\alpha_2)=2$ and $A_{\alpha}=R_{\alpha}[X]/((X-\alpha_1)(X-\alpha_2))$, with $\deg(X)=2$. Comultiplication is given by:

$\Delta(1)=(X-\alpha_1)\otimes 1 + 1\otimes (X-\alpha_2)=(X-\alpha_2)
\otimes 1 + 1\otimes (X-\alpha_1)$,

$\Delta(X-\alpha_1)=(X-\alpha_1)\otimes (X-\alpha_1)$, and
$\Delta(X-\alpha_2)=(X-\alpha_2)\otimes (X-\alpha_2)$.

Realising $X-\alpha_i$ as shifted dots, \cite{khovanov2020link} diagrammatically represents them as $\circled{i}$, i.e. $X_{\circled{i}}:=X-\alpha_i$. Following \cite{khovanov2020link}, we set $X_*=X_{\circled{1}}+X_{\circled{2}}$. This can be described by the relation shown in Figure \ref{skein2}. The link homology theory resulting from the $(R_\alpha, A_\alpha)$ extension is called $\alpha$-homology and the $\A$-homology of a link diagram $L$ is written $\A(L)$. In Section \ref{prelim}, we will show that a $*$ can be moved around freely on a movie of a connected $4$-dimensional cobordism without affecting the induced $\A$-homology map (up to an overall sign). This enables us to define a notion of $X_*$-torsion. 

\begin{figure}
    \centering
    $ \cfDecSquare{0.5}{\star} = \cfDecSquare{0.5}{\circled{1}} + \cfDecSquare{0.5}{\circled{2}}$
    \caption{Pictorial definition of $X_*$; figure taken from \cite[page 9]{khovanov2020link}}
    \label{skein2}
    \end{figure}
    
\begin{definition}
Let $L$ be a knot. Analogous to Definition \ref{defntor}, we declare $\gamma \in \A(L)$ to be \emph{$X_*$-torsion} if $X_*^n \gamma=0$ for some $n>0$. The \emph{order} of $\gamma$ is the smallest choice of $n$ for which $X_*^n \gamma=0$. 
\end{definition}

\begin{definition}
For a knot $L$, we define the \emph{$X_*$-torsion order} of the $\A$-homology of $L$, written $\nu(L)$, to be the maximum order of a $X_*$-torsion homology class in $\A(L)$. 
\end{definition}

Analogous to Theorems \ref{bound} and \ref{side}, the following are the main results of the paper in the $\A$-homology setting.

\begin{theorem}
\label{side1}
If $d$ is the ribbon distance between knots $K$ and $K'$, then $X_*^d\BN(K)\cong \pm  X_*^d\BN(K')$.
\end{theorem}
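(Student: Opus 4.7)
The plan is to parallel the proof of the Bar-Natan analogue (Theorem \ref{side}), with extra bookkeeping to handle the fact that a $\circled{i}$-dot cannot be freely repositioned on a connected cobordism in $\A$-homology. Let $C : K \to K'$ be a ribbon concordance realizing the ribbon distance $d$, so $C$ has only index-$0$ and index-$1$ critical points, with $d$ of each. Let $\bar C : K' \to K$ be its time-reversal, consisting of $d$ saddles and $d$ deaths, and denote the induced maps on $\A$-homology by $F_C$ and $F_{\bar C}$.

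The central step is to establish the identity $F_{\bar C} \circ F_C = \pm X_*^d \cdot \Id_{\A(K)}$ together with its symmetric counterpart $F_C \circ F_{\bar C} = \pm X_*^d \cdot \Id_{\A(K')}$. To prove the first, I would isotope the composite cobordism $\bar C \circ C$ so that each saddle of $C$ is followed immediately by the saddle of $\bar C$ that undoes it. Every such pair of opposite saddles produces a tube in the composite surface, to which I apply the $\A$-homology neck-cutting identity arising from $\Delta(1) = X_{\circled{1}} \otimes 1 + 1 \otimes X_{\circled{2}}$. After applying neck-cutting to each of the $d$ tubes and cancelling the $d$ birth--death pairs via Morse cancellation, the composite reduces to a decorated identity cobordism on $K$. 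The Symmetry Lemmas (Theorems \ref{mirror} and \ref{mirror1}) are then invoked to show that the scattered $\circled{1}$- and $\circled{2}$-decorations consolidate to $\pm X_*^d$ times the identity, where the sign absorbs the potential flips incurred by relocating dots across the connected surface.

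With both compositions identified as $\pm X_*^d \cdot \Id$, the asserted isomorphism follows by the same module-theoretic argument as for Theorem \ref{side}: since $F_C$ is $R_\alpha$-linear we have $F_C(X_*^d \A(K)) \subseteq X_*^d \A(K')$, and the two composite identities exhibit $F_C$ and $F_{\bar C}$ as mutually inverse (up to an overall sign) maps between $X_*^d \A(K)$ and $X_*^d \A(K')$.

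The main obstacle is sign control. Unlike in $\BN$, where the analogous calculation produces $h^d \cdot \Id$ with no sign ambiguity, every relocation of a $\circled{1}$- or $\circled{2}$-dot across a connected region of the cobordism in $\A$-homology may contribute a sign, and these must be tracked consistently. The Symmetry Lemmas are the tool that quantifies these sign changes, and the delicate part of the argument is verifying that the signs accumulated from the $d$ independent saddle pairs combine into a single overall sign that matches between the two composite identities, so that the resulting signed isomorphism is unambiguously defined.
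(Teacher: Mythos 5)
Your two composite identities have the roles swapped, and one is overstated. With $C : K \to K'$ a ribbon concordance and $\bar C : K' \to K$ its reverse, the composite $\bar C\circ C : K\to K$ (ribbon-then-reverse) is precisely the one handled by the Levine--Zemke-style injectivity statement, Theorem~\ref{inj1}: it induces the \emph{identity} on $\A(K)$, with no $X_*^d$ factor. Indeed, if you neck-cut the $d$ tubes connecting the $d$ unknotted, unlinked spheres to the identity cylinder on $K$, the term with the undotted sphere evaluates to $\epsilon(1)=0$ and dies, while the term placing $\circled{i}$ on the sphere evaluates to $\epsilon(X-\alpha_i)=1$, leaving exactly $\Id_{\A(K)}$. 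The factor $X_*^d$ appears on the \emph{other} composite $C\circ\bar C : K'\to K'$ (reverse-then-ribbon), where the cobordism passes through $K\sqcup U^d$ and it is the $d$ identity tubes on $U^d$ that are cut; and even there, what Lemma~\ref{main1} gives is the \emph{scaled} identity $X_*^d\,\A(C\circ\bar C)=\pm X_*^d\Id_{\A(K')}$, not $\A(C\circ\bar C)=\pm X_*^d\Id$ (the latter does not follow, since $X_*$ can annihilate classes and cannot be cancelled).

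Because of this, the ``mutually inverse'' finish does not work: if both composites were $\pm X_*^d\Id$, restricting to $X_*^d\A(-)$ would produce $\pm X_*^{2d}\Id$ under composition, not the identity, so $F_C$ and $F_{\bar C}$ would not be inverse on those submodules. The correct module-theoretic step uses the asymmetric pair: $F_{\bar C}F_C=\Id$ makes $F_C$ injective, and $R_\alpha$-linearity gives $F_C(X_*^d\A(K))\subseteq X_*^d\A(K')$; surjectivity onto $X_*^d\A(K')$ then follows from $\pm X_*^d y = X_*^d F_C F_{\bar C}(y)=F_C\bigl(X_*^d F_{\bar C}(y)\bigr)$. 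You should also note that the ribbon distance is realized by a \emph{chain} of ribbon concordances with at most $d$ saddles, each in either direction; the single-concordance case you treat extends to the general statement by transitivity of $\cong$ across the chain, which is the reason for the casework in Theorem~\ref{chain1}.
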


\begin{theorem}
\label{bound1}
If $d$ is the ribbon distance between knots $K$ and $K'$, then $|\nu(K)-\nu(K')|\leq d$.
\end{theorem}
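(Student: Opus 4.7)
The plan is to deduce Theorem \ref{bound1} from Theorem \ref{side1} in direct parallel with how Theorem \ref{bound} follows from Theorem \ref{side}. The underlying algebraic fact is that an isomorphism respecting multiplication by $X_*$ (even up to a global sign) at the level of $X_*^d$-submodules is enough to transport $X_*$-torsion orders with error at most $d$. Sign ambiguity is harmless here because whether an element of $\A(K')$ vanishes is unchanged by multiplication by $-1$.

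By symmetry it suffices to prove $\nu(K) - \nu(K') \le d$. If $\nu(K) \le d$ this is immediate, so set $n = \nu(K) > d$ and pick $\gamma \in \A(K)$ realising this maximum, i.e.\ $X_*^{n-1}\gamma \neq 0$ and $X_*^n \gamma = 0$. Then $\eta := X_*^d \gamma$ lies in $X_*^d \A(K)$ and has $X_*$-torsion order exactly $n-d$, since $X_*^{n-d-1}\eta = X_*^{n-1}\gamma \neq 0$ while $X_*^{n-d}\eta = X_*^n\gamma = 0$.

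Next I would apply Theorem \ref{side1} to obtain an isomorphism $\varphi \colon X_*^d \A(K) \to X_*^d \A(K')$ that commutes with multiplication by $X_*$ up to an overall sign. Then $\varphi(\eta)$ is nonzero and still has $X_*$-torsion order exactly $n-d$. Choosing any preimage $\gamma' \in \A(K')$ of $\varphi(\eta)$ under multiplication by $X_*^d$, we compute $X_*^{n-1}\gamma' = X_*^{n-d-1}(X_*^d\gamma') \neq 0$ and $X_*^n \gamma' = X_*^{n-d}(X_*^d\gamma') = 0$, so $\gamma'$ is $X_*$-torsion of order exactly $n$ in $\A(K')$. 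Hence $\nu(K') \geq n = \nu(K)$, giving $\nu(K) - \nu(K') \leq 0 \leq d$ as required.

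The main point requiring genuine care is not this linear-algebra bookkeeping but verifying that the isomorphism supplied by Theorem \ref{side1} really does intertwine the two $X_*$-actions modulo a single global sign, as opposed to allowing element-dependent signs that could in principle move zero elements to nonzero elements. This is precisely the role of the symmetry lemmas (Theorems \ref{mirror} and \ref{mirror1}) flagged in the introduction; once they provide the needed $R_\alpha$-linear compatibility of $\varphi$ with the $X_*$-action, the chain of vanishing/non-vanishing implications above goes through and yields Theorem \ref{bound1}.
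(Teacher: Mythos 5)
Your argument takes a genuinely different route from the paper's, and there is a gap in it that matters. The paper does not deduce Theorem \ref{bound1} from Theorem \ref{side1} at all; it goes through Theorem \ref{chain1} (producing chain maps $u,v$ whose compositions induce $\pm X_*^d$ on homology) and then applies Lemma \ref{ext1}, which is an abstract torsion-order lemma in the spirit of \cite[Lemma 3.1]{Alishahi_2019} carrying an explicit \emph{connectedness condition} on the movies realizing $u$ and $v$. That connectedness hypothesis is what lets one invoke Theorem \ref{indiff} to move $*$'s freely and compare torsion orders across the maps. Your argument bypasses all of that machinery and reads the bound off the module isomorphism $X_*^d\A(K)\cong\pm X_*^d\A(K')$ directly.

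The gap is exactly the point you flag but do not close. Theorem \ref{side1} as stated only asserts an isomorphism between the submodules $X_*^d\A(K)$ and $X_*^d\A(K')$; it does not assert that the isomorphism $\varphi$ intertwines multiplication by $X_*$, even up to a single overall sign. Your steps (4)--(6) use this intertwining repeatedly: that $\varphi(\eta)$ has the \emph{same} $X_*$-torsion order as $\eta$, and that $X_*^{n-d}\varphi(\eta)=\pm\varphi(X_*^{n-d}\eta)$. You attribute this to the Symmetry Lemmas (Theorems \ref{mirror} and \ref{mirror1}), but those are used in the paper only to prove Lemma \ref{main1} (the cancellation of paired dotted cobordisms after neck-cutting). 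The statement you actually need is that the map underlying $\varphi$ is induced by a \emph{connected} cobordism movie, so that Theorem \ref{indiff} applies to commute $X_*$ past it up to sign; this is precisely the extra hypothesis in Lemma \ref{ext1} that your shortcut omits. Without supplying that, the "choose any preimage $\gamma'$" computation does not go through.

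There is also a red flag you should have noticed: if your argument were complete as written, it would prove that $\nu(K)=\nu(K')$ whenever $\min(\nu(K),\nu(K'))>d$, which is strictly stronger than $|\nu(K)-\nu(K')|\le d$. The paper's careful route through Lemma \ref{ext1} produces only the inequality, and this discrepancy is a signal that the submodule isomorphism is being asked to do more work than its stated proof actually supports. The safer (and intended) path is the one the paper takes: prove the chain-level statement of Theorem \ref{chain1}, verify the connectedness condition, and invoke Lemma \ref{ext1}.
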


Table \ref{tab} contains a brief summary of the results of this paper as well as previously known results about ribbon distance and unknotting number bounds from perturbations of Khovanov Homology. 

\begin{table}[!ht]
\setlength\extrarowheight{2pt} 
\begin{tabularx}{\textwidth}{|C|C|C|C|}
\hline
\textbf{Homology Theory} & \textbf{Brief Description} & \textbf{Quantity that gives the bound} & \textbf{Bound for}  \\\hline
Lee Homology & Frobenius algebra: $R[t][X]/(X^2=t)$; Counit: $1\mapsto 0, X\mapsto 1$;  Comultiplication: $1\mapsto 1\otimes X +X\otimes 1$, $X\mapsto X\otimes X + t1\otimes 1$. & $X$-torsion order & Unknotting number: \cite{alishahi2017lee}, Ribbon distance: \cite{Sarkar_2020} \\
\hline
Bar-Natan's char 2 Homology & Frobenius algebra: $\mathbb{F}_2[h][X]/(X^2-hX)$; $\epsilon(1)=0, \epsilon(X)=1$; $\Delta(1)=1\otimes X + X\otimes 1 + h1\otimes 1$, $\Delta(X)=X\otimes X$. & $h$-torsion order & Unknotting number: \cite{Alishahi_2019}, Ribbon distance: this paper\\
\hline
$\A$-Homology & Frobenius algebra: $\mathbb{Z}[\alpha_1,\alpha_2][X]/((X-\alpha_1)(X-\alpha_2))$;

Comultiplication: 
$\Delta(1)=(X-\alpha_1)\otimes 1 + 1\otimes (X-\alpha_2)=(X-\alpha_2)
\otimes 1 + 1\otimes (X-\alpha_1)$,
$\Delta(X-\alpha_i)=(X-\alpha_i)\otimes (X-\alpha_i)$. & $X_*$-torsion order; $X_*:=2X-(\A_1+\A_2)$ & Ribbon distance: this paper\\
\hline
\end{tabularx}
\caption{Summary of results old and new in different variants of Khovanov Homology. For \cite{Sarkar_2020} and \cite{alishahi2017lee}, $2\neq 0$ in $R$ (for more details about \cite{Sarkar_2020} and \cite{alishahi2017lee} the reader is encouraged to refer to the original papers)}
\label{tab}
\end{table}

\subsection{Acknowledgements}
I am very grateful to Mikhail Khovanov for his suggestions which sparked this project and his helpful discussions on it. I am also very thankful to him for introducing me to $\A$-homology and suggesting the extension to $\A$-homology.

I am also grateful to Sucharit Sarkar for helpful conversations about the project. I would also like to thank him as several ideas in this paper are borrowed from the ideas in his paper \cite{Sarkar_2020}, in which he first defined the notion of ribbon distance and explored a similar question in the setting of Lee Homology. 

I am also thankful to Akram Alishahi for a helpful correspondence about \cite{Alishahi_2019} and for some of the results and ideas from \cite{Alishahi_2019} that are used in this paper. I am also thankful to Orsola Capovilla-Searle for a helpful conversation about attaching points of saddles in a link diagram in the proof of \cite[Corollary 2.3]{Alishahi_2019}. I am also thankful to Sungkyung Kang for a helpful correspondence about link homology theories and ribbon concordances. I am also thankful to Rostislav Akhmechet and Maggie Miller for helpful conversations on related topics.

I am eternally grateful to my advisor Adam Levine, for everything over the past two years which includes (but isn't limited to) introducing me to Khovanov Homology as well as its relationship with ribbon concordances and his many helpful conversations on closely related ideas.

\section{Bounding the ribbon distance via Bar-Natan Homology}
\label{bounds}

The Bar-Natan complex of a link diagram $L$ will be denoted by $C_{\BN}(L)$. The map induced on $\BN$ Homology by a $4$-dimensional cobordism $C$ or its movie $M$ is written $\BN(C)$ or $\BN(M)$ respectively. It is easily verified that $\BN$ Homology satisfies a neck-cutting relation as shown in Figure \ref{neck}, where a solid dot indicates multiplication by $X$, and $h$ indicates multiplication by $h$.

\begin{figure}
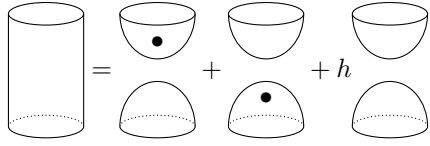

    \centering
    $\cfCircleId{0.5} =
\cfCircleSplit{0.5}{\bullet}{}
+
\cfCircleSplit{0.5}{}{\bullet}
+
h\cfCircleSplit{0.5}{}{}$
    \caption{The neck-cutting relation in $\BN$ Homology; figure modified from \cite[page 9]{khovanov2020link}}
    \label{neck}
    \end{figure}

Just as \cite{LevineZemke} showed that a ribbon concordance induces an injective map on Khovanov Homology, a similar statement holds for $\BN$ Homology as well. 

\begin{theorem}
\label{inj}
If $C:K\to K'$ is a ribbon concordance and $C':K'\to K$ is it opposite cobordism, then $\BN(C' \circ C)=\Id_{\BN(K)}$.
\end{theorem}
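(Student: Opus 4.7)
The plan is to mirror the Levine--Zemke argument for Khovanov homology, adapted to the Bar-Natan TQFT. Since $C\colon K \to K'$ is a ribbon concordance, after isotopy we may present its movie as $b$ births of unknotted circles $C_1, \ldots, C_b$ followed by $b$ band attachments $B_1, \ldots, B_b$; the two counts agree because $C$ is topologically a cylinder. The opposite cobordism $C'$ is then read as the reversed movie: the $b$ saddles in reverse order, followed by $b$ cap-off deaths of the circles $C_1,\dots,C_b$.

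The crucial geometric input is that, by the ribbon hypothesis, an ambient isotopy rel boundary deforms $C' \circ C$ into the trivial cylinder $K \times I$ joined by $b$ thin tubes to $b$ disjoint unknotted $2$-spheres. In this doubling picture, each cancelling index-$0$/index-$2$ pair associated to a circle $C_i$ produces a $2$-sphere, while the cancelling index-$1$/index-$1$ pair associated to the band $B_i$ and its reverse provides the tube attaching that sphere to the cylinder. This is where the ribbon condition is used in an essential way: for a general concordance the paired index-$1$ handles would fail to cancel and one would instead acquire extra genus.

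Having put the cobordism into this normal form, apply the Bar-Natan functor and neck-cut each of the $b$ tubes using the relation in Figure \ref{neck}. For a tube connecting a $2$-sphere to the cylinder, the three resulting terms evaluate as follows. The term $\bullet \otimes 1$ places a dot on the sphere side, so the sphere contributes $\epsilon(X) = 1$ and the cylinder piece survives as $\Id$. The term $1 \otimes \bullet$ places a dot on the cylinder side and leaves an undotted sphere, which evaluates to $\epsilon(1) = 0$ and kills the term. The final $h$-term again leaves an undotted sphere and hence vanishes. Thus each tube collapses cleanly to the identity, and iterating across all $b$ tubes yields $\BN(C' \circ C) = \Id_{\BN(K)}$.

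The main obstacle is the geometric doubling statement in the second paragraph: one must verify that the index-$0$/index-$2$ and index-$1$/index-$1$ handle pairs coming from $C$ and $C'$ really do cancel up to ambient isotopy rel boundary, producing exactly the claimed sphere-plus-tube configuration with no residual genus. Once this is in place, the extra $h$-term in the Bar-Natan neck-cutting relation -- absent in Khovanov and in Lee -- poses no trouble, since it is always accompanied by an undotted sphere factor that annihilates it, and the proof goes through in a strictly parallel manner to \cite{LevineZemke}.
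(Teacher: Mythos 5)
Your proposal is correct and follows essentially the same route as the paper: both adapt \cite[Propositions 6 and 7]{LevineZemke} to the Bar-Natan TQFT, using the ribbon structure to reduce $C'\circ C$ to a cylinder with tubes to unknotted, unlinked spheres, and then observing that the extra $h$-term in the Bar-Natan neck-cutting relation vanishes because it always carries an undotted sphere factor, which evaluates to $\epsilon(1)=0$. The paper's proof is simply terser, citing Levine--Zemke directly for the geometric normal-form step that you spell out in your second paragraph.
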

\begin{proof}

One way to see this is to follow the approach used by \cite[Proposition 7]{LevineZemke}. Even though the neck-cutting relation in $\BN$ homology  varies slightly from that in Khovanov Homology, Proposition 6, Part 1 of \cite{LevineZemke} continues to hold with ``$\Kh$" replaced by ``$\BN$". Thus a singly dotted, unknotted, unlinked sphere induces the multiplication by $1$ map on $\BN$ homology, while an undotted, unknotted, unlinked sphere induces the $0$ map on $\BN$ homology. Now following the approach of \cite[Proposition 7]{LevineZemke} and applying the appropriate neck-cutting relation in $\BN$ Homology gives the desired result. 
\end{proof}

The following is the main lemma used to prove Theorem \ref{bound}.

\begin{lemma}
\label{main}
Let $C':K'\to K$ be a ribbon concordance of knots with $d$ saddles, and let $C$ be its opposite cobordism. Then 
    $\BN(h^d C'\circ C)=\BN(h^d \Id_K)$
\end{lemma}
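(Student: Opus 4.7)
The plan is to mirror Sarkar's proof \cite{Sarkar_2020} for the analogous lemma in Lee Homology, adapted to the Bar-Natan neck-cutting relation (Figure \ref{neck}). By Euler characteristic applied to a genus-$0$ annulus between knots, $C'$ has exactly $d$ births and $d$ fusion saddles, and so $C$ has $d$ deaths and $d$ fission saddles. In the composite $W := C' \circ C \colon K \to K' \to K$, each fission of $C$ is naturally paired with the fusion of $C'$ that it reverses, and each death of $C$ pairs with a birth of $C'$. First, I would use movie moves to isotope each of the $d$ paired fission-fusion saddles in $W$ into localized positions, so that for each pair the fission is immediately followed in time by its matching fusion, creating and then destroying a cylindrical neck on the surface.

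Next, I would apply the Bar-Natan neck-cutting relation (Figure \ref{neck}) at each of these $d$ necks. This produces an expansion of $\BN(W)$ as a sum of $3^d$ terms, indexed by a choice at each neck among three options: (a) a dot on the lower cap, (b) a dot on the upper cap, or (c) an $h$-weighted pair of undotted caps. Multiplying the entire expression by $h^d$ writes $h^d \BN(W)$ as a sum of $3^d$ terms with an overall $h^d$ prefactor, and the goal becomes to show this collapses to $h^d \BN(\Id_K)$.

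The principal obstacle is that dots cannot be slid freely along a connected cobordism in $\BN$ (unlike in Lee Homology), so terms with dots on upper caps and terms with dots on lower caps are not tautologically equal. To address this, I would invoke the Symmetry Lemmas (Theorems \ref{mirror} and \ref{mirror1}). The composite $W$ admits a natural reflectional symmetry about the middle level $K'$ that interchanges $C$ and $C'$, and consequently swaps the upper and lower caps at each cut neck; the Symmetry Lemmas use this symmetry to relate the dotted configurations arising from type (a) and type (b) choices. Combined with the observation that an undotted sphere component evaluates to $0$ in $\BN$ (since $\epsilon(1)=0$), systematically pairing terms via these lemmas should cause all but one of the $3^d$ contributions to cancel or vanish, leaving the single surviving contribution $h^d \BN(\Id_K)$ coming from the identity-component of the cut cobordism. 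The hard part is therefore the combinatorial bookkeeping that shows the Symmetry Lemma cancellations exactly absorb the $3^d - 1$ unwanted terms at the cost of precisely the $h^d$ prefactor.
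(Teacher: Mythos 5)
Your proposal departs from the paper's argument in a way that opens a genuine gap. You propose to neck-cut $W = C'\circ C$ directly, but $W$ contains no cylindrical necks where you are looking for them: in $W$ each fission saddle of $C$ is followed by a death, and only then by a birth and the matching fusion of $C'$. A death followed by a birth is \emph{two disjoint disks}, not a tube; no sequence of movie moves converts it into a cylinder (the two surfaces are not isotopic and the cobordism-map induced is generally different). So the first step of your argument --- ``isotope so that the fission is immediately followed by the fusion, creating a cylindrical neck'' --- is not a legitimate isotopy of $W$; it silently replaces $W$ by a \emph{different} cobordism.

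That different cobordism is precisely the auxiliary cobordism the paper introduces: $D := C_2'\circ C_3'\circ C_3\circ C_2$, which is $W$ with all deaths and births omitted so that the $d$ unknot components persist as genuine cylinders across the middle. The paper then computes $\BN(D)$ \emph{twice} and equates the results. One computation neck-cuts the cylinders in $\Id_{K'\sqcup U^d}$ inside $D$ and uses the Symmetry Lemma: the cancellations you anticipate do occur (in characteristic $2$, mirror-paired terms add to zero), but the single surviving $h^d$-term has underlying surface $T = C_4'\circ C_4$ conjugated by the rest, which is exactly $C'\circ C$, \emph{not} $\Id_K$. So this computation gives $\BN(D) = \BN(h^d\, C'\circ C)$. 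The other computation observes that $\BN(D) = \BN(C_2'\circ C_2)$ since $C_3'\circ C_3$ is isotopic to the identity, and then invokes Theorem~\ref{Al2.3} (Alishahi's split-then-merge $=h$) $d$ times to get $\BN(D) = \BN(h^d\,\Id_K)$. Equating the two gives the lemma.

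Your write-up is missing both halves of this mechanism: you never introduce $D$ explicitly, and you never invoke Theorem~\ref{Al2.3}, which is indispensable for producing $\Id_K$ on the right-hand side. Relatedly, your assertion that the surviving neck-cut contribution has ``identity-component'' $\Id_K$ is incorrect --- the underlying surface after cutting is the connected genus-zero cobordism $C'\circ C$, and the $h^d\Id_K$ only appears via the second, independent computation of $\BN(D)$. Finally, your proposed use of ``undotted spheres evaluate to $0$'' is not how the cancellation goes here; the cut surface remains connected (the caps are attached to the main sheet via the saddles of $C_2$ and $C_2'$), and the cancellation is purely the characteristic-$2$ pairing supplied by the Symmetry Lemma.
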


Lemma \ref{main} will be proved later. First we will state a few facts about $\BN$ Homology that we will need to prove Lemma \ref{main}. Unlike Lee Homology and Khovanov Homology, in $\BN$ Homology when a dot is moved past a crossing in a link diagram, it incurs a ``cost" of $h$, as described by the following theorem:

\begin{theorem} \label{Al2.1} \cite [Lemma 2.1]{Alishahi_2019}
Given a link diagram $L$, and points $p$ and $q$ on either side of a crossing in $L$. Let $X_i$ for $i\in \{p,q\}$ mean the action of a dot at the location $i$ on $L$. Then in $\BN$ homology, $X_p+X_q=h$.
\end{theorem}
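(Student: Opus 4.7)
The plan is to show that $X_p + X_q + h \cdot \Id$ is chain homotopic to zero on the Bar-Natan complex $C_{\BN}(L)$ (we are in characteristic $2$, so the sign is irrelevant). Throughout I work inside Bar-Natan's formal cobordism category, where chain maps are $\FF_2$-linear combinations of decorated cobordisms modulo skein relations, and the neck-cutting relation of Figure~\ref{neck} is the essential calculational tool.

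I would first isolate the distinguished crossing inside a small disk and locally write $C_{\BN}(L)$ as the mapping cone of the saddle cobordism $S\colon C_0 \to C_1$ between its two resolutions. Since $p$ and $q$ sit on the two strands meeting at the crossing, in one resolution they end up on the same resolved arc while in the other they lie on distinct arcs. The chain homotopy I propose is the undotted reverse saddle $\bar S \colon C_1 \to C_0$, placed in the off-diagonal slot of $\End(C_0 \oplus C_1)$. The equation $d H + H d = X_p + X_q + h \cdot \Id$ then reduces to checking that $\bar S \circ S$ on $C_0$ and $S \circ \bar S$ on $C_1$ each equal $X_p + X_q + h \cdot \Id$, together with $d \bar S + \bar S d = 0$ for the remaining differentials. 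The latter is automatic because $\bar S$ is supported in a disk disjoint from the other crossings.

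The core computation is that each composition $\bar S \circ S$ and $S \circ \bar S$ is an identity cobordism with an added tube along the saddle region. Applying neck-cutting to that tube produces exactly three terms: an identity cobordism dotted near $p$ (contributing $X_p$), one dotted near $q$ (contributing $X_q$), and $h$ times the undotted identity (contributing $h \cdot \Id$). The main obstacle I anticipate is a careful check of conventions: after drawing the saddle-reverse-saddle composition in the local disk I must verify that the neck to be cut really separates $p$ from $q$, so that the two dotted terms produced by neck-cutting land on the correct sides and yield $X_p$ and $X_q$ respectively, rather than two copies of the same dot. Once this local geometric picture is pinned down, passing to homology gives $X_p + X_q = h$ in $\BN(L)$.
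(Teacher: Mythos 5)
The overall strategy is the right one, and it matches both Alishahi's original argument and this paper's proof of the $\A$-homology analogue (Theorem \ref{moving}): write $C_{\BN}(L)$ locally as the mapping cone of the saddle $S\colon C_{\BN}(L_0)\to C_{\BN}(L_1)$ at the distinguished crossing, use the reverse saddle $\bar S$ (in the off-diagonal slot) as the homotopy, note that $\bar S$ (anti-)commutes with the remaining differentials, and then compute $\bar S S$ and $S\bar S$ by neck-cutting the tube, which (via Figure \ref{neck}) produces exactly three terms. The paper itself does not prove this statement but cites \cite[Lemma~2.1]{Alishahi_2019}; your outline is the same argument.

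There is, however, a genuine gap, and it sits precisely at the point you flag as the ``obstacle.'' You place $p$ and $q$ ``on the two strands meeting at the crossing'' and correctly observe that with that choice, in one of the two resolutions $p$ and $q$ land on the same local arc. But then, in that resolution, the neck in $\bar S S$ (resp.\ $S\bar S$) does \emph{not} separate $p$ from $q$: one side of the cut carries both $p$ and $q$, while the other carries neither. Neck-cutting therefore gives $X_p + X_r + h$ where $r$ lies on the other local arc, and since $X_p = X_q$ on that resolution (they are on the same arc), this is not $X_p + X_q + h$. So the proposed homotopy does not produce $X_p + X_q + h$ on that summand of the cone, and no amount of ``careful check of conventions'' can fix it with your placement of $p,q$.

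The correct reading of ``on either side of a crossing'' (consistent with Alishahi's use, and with the role these points play in Theorem \ref{splitsaddle}, where a dot is slid along a single strand through crossings) is: $p$ and $q$ lie on the \emph{same} strand of $L$, just before and just after it passes through the crossing. With that choice, in \emph{both} resolutions $p$ and $q$ land on the two distinct local arcs — exactly the attaching arcs of the saddle — so the neck does separate them in each of $L_0$ and $L_1$, the neck-cutting yields $X_p + X_q + h$ on each summand, and the homotopy closes. Replace your sentence about ``the two strands'' with this, drop the claim that in one resolution they coincide on a single arc, and the rest of your argument is sound.
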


Theorem \ref{Al2.1} is used in \cite{Alishahi_2019} to show that in $\BN$ Homology the map induced by a saddle that increases the number of components followed by the saddle in reverse is equivalent to scaling the identity map by $h$:

\begin{theorem} \label{Al2.3} \cite[Corollary 2.3]{Alishahi_2019}
Let $L$ be a link diagram obtained from the link diagram $K$ by performing an oriented saddle that increases the number of components of $K$. Let $f$ denote the $\BN$ map induced by this saddle, and $f'$ be the map induced by doing the saddle in reverse. Then $f'\circ f$ induces a map that is homotopic to multiplication by $h$, i.e. $\BN(f' \circ f)=\BN(h \Id_{K})$
\end{theorem}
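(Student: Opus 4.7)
The plan is to use the neck-cutting relation of Figure \ref{neck} to decompose the cobordism $f' \circ f$ and then compute each resulting piece using the Frobenius structure of $A_6 = \mathbb{F}_2[h][X]/(X^2-hX)$.

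Topologically, $f' \circ f \colon K \to K$ is the identity cobordism with a handle attached at the two endpoints of the saddle. The hypothesis that $f$ increases the number of components of $K$ ensures that these endpoints lie on the same link component, so the handle is attached within a single component. I would cut this handle along a circle transverse to its tube via the neck-cutting relation. This produces three summands: two ``dotted'' summands with a dot placed on one of the two resulting capping disks, and $h$ times an undotted summand.

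In the undotted summand, the Frobenius counit identities $(\epsilon \otimes \Id)\Delta = \Id$ and $\mu(\eta \otimes \Id) = \Id$ collapse each capping disk against $f$ or $f'$, reducing the affected component back to the identity cobordism. Hence this summand contributes $h \cdot \Id_{\BN(K)}$. For each dotted summand, the extra factor of $X$ on the affected component produces the operator ``multiplication by $X$'', so the two dotted contributions sum to $2X = 0$ in characteristic $2$ and cancel. Adding all three summands yields $\BN(f' \circ f) = h \cdot \Id_{\BN(K)}$, which is the desired statement.

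The main obstacle I anticipate is carrying this argument out rigorously at the chain level of the cube of resolutions. In a given resolution of $K$, the saddle may behave as a split ($\Delta$, the case handled directly above) or as a merge ($\mu$), and in the merge case the local handle operator reduces to $\Delta \mu$ rather than $\mu\Delta$. Some care---perhaps using Theorem \ref{Al2.1} to slide dots across crossings, or appealing to the functoriality of $\BN$ applied directly to the topological handle---is needed to ensure the same $h \cdot \Id$ conclusion across all resolutions, up to chain homotopy. The characteristic-$2$ vanishing $2X = 0$ is crucial throughout.
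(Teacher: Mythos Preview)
Your neck-cutting decomposition is the right starting point, and it is precisely Alishahi's approach (the present paper does not re-prove this statement but cites \cite[Corollary 2.3]{Alishahi_2019}; the parallel argument for the $\A$-theory is spelled out here as Theorem~\ref{splitsaddle}). The gap is in the sentence ``the two dotted contributions sum to $2X=0$.'' After cutting the handle, the two capping disks sit at the two \emph{distinct} attaching points $p$ and $q$ of the saddle on $K$, so the dotted summands are the operators $X_p$ and $X_q$, not a single ``multiplication by $X$.'' These are genuinely different chain maps: in any resolution of $K$ in which $p$ and $q$ lie on different circles they act on different tensor factors, so $X_p+X_q$ is not zero on the nose, and there is no well-defined scalar $X$ acting on $C_{\BN}(K)$ for the characteristic-$2$ trick to apply to. Your final paragraph correctly diagnoses that the merge-versus-split dichotomy across resolutions is exactly where this breaks.

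The fix is the one you gesture at: Theorem~\ref{Al2.1}. Neck-cutting yields $f'\circ f = X_p + X_q + h\cdot\Id$ at the chain level; since the saddle is oriented and splits a component, $p$ and $q$ lie on the same component of $K$, and an arc from $p$ to $q$ along the diagram passes an \emph{even} number of crossings. Applying Theorem~\ref{Al2.1} once at each such crossing and telescoping gives $X_p + X_q = 0$ in homology (the accumulated $h$-terms cancel in pairs over $\mathbb{F}_2$), whence $\BN(f'\circ f)=h\cdot\Id$. So the characteristic-$2$ cancellation does occur, but only after invoking Theorem~\ref{Al2.1} together with this parity observation about the attaching points---not directly from the raw neck-cut.
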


$\BN$ homology satisfies a Symmetry Lemma as we will show below:

\begin{lemma} (Symmetry Lemma)
\label{mirror}
Suppose that $C$ is a movie for a dotted, connected $4$-dimensional cobordism. Let $T$ be the same movie as $C$, but without any dots. Suppose that there exists a vertical line passing through the center of $T$ about which the movie $T$ exhibits mirror symmetry. Then the dotted movie $r(C)$ formed by reflecting the location of the dots in $C$ induces the same map in $\BN$ homology as the map induced by $C$.
\end{lemma}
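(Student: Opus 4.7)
The plan is to prove the Symmetry Lemma by induction on the number of dots of $C$ that lie off the axis of symmetry, leveraging Theorem \ref{Al2.1} and the fact that $\BN$ is defined over $\FF_2$.

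For the base case, every dot is on the axis, so $C = r(C)$ and the statement is trivial. For the inductive step, pick a dot $\delta$ of $C$ at an off-axis position $p$, and let $C^{\dagger}$ denote the movie obtained from $C$ by moving only $\delta$ to its mirror image $r(p)$, while leaving all other dots in their original positions. If I can show $\BN(C) = \BN(C^{\dagger})$, then applying the inductive hypothesis to $C^{\dagger}$ (which has one fewer off-axis dot than $C$) yields $\BN(C^{\dagger}) = \BN(r(C))$ and completes the argument.

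To compare $C$ and $C^{\dagger}$, I would choose a generic symmetric path $\gamma$ on the connected surface $T$ from $p$ to $r(p)$ of the form $\gamma = \gamma_1 \cup r(\gamma_1)$, where $\gamma_1$ runs from $p$ to some point $q$ on the axis of symmetry and $r(\gamma_1)$ is its mirror image running from $q$ to $r(p)$. Connectedness of $T$ together with the mirror symmetry of the underlying movie guarantees such a path exists, and by genericity I may arrange that $\gamma$ meets no link-diagram crossing lying on the axis. As the dot $\delta$ is slid along $\gamma$, Theorem \ref{Al2.1} tells us that each traversal of a link-diagram crossing contributes a correction term $h\cdot\BN(C\setminus\delta)$, where $C\setminus\delta$ is the movie with $\delta$ removed. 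Since $\gamma_1$ and $r(\gamma_1)$ traverse the same number $k$ of crossings (their crossings are exact mirror images of each other, and none lies on the axis), the total contribution is
\[
\BN(C) \;=\; \BN(C^{\dagger}) + 2k \cdot h \cdot \BN(C\setminus\delta) \;=\; \BN(C^{\dagger}),
\]
where the last equality uses that $\BN$ is an $\FF_2$-theory.

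The main obstacle is to rigorously justify the ``slide-a-dot-along-a-path'' computation, namely that moving the dot $\delta$ along $\gamma$ through a Reidemeister or Morse event in the movie (but away from any link-diagram crossing) contributes no extra correction beyond the $h$-terms coming from crossings. This should follow from the naturality of the $\BN$ TQFT under elementary moves together with the fact that within a single region of a link diagram the dot can be slid freely without changing the induced map. Once this local analysis is in place, the symmetric choice of $\gamma$ supplies the parity cancellation that closes the induction.
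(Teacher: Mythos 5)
Your approach is essentially the same as the paper's: move one dot at a time to its mirror image along a path that is itself symmetric about the $t = 0.5$ frame, observe that the number of crossing-passings is therefore even, and use the $\mathbb{F}_2$ coefficients to cancel the resulting pairs of $h$-corrections from Theorem~\ref{Al2.1}. The ``main obstacle'' you flag --- rigorously reducing any movement of a dot on a movie for a connected cobordism to a sequence of (a) commutations with elementary cobordisms happening away from the dot and (b) passings of crossings within a single frame --- is exactly what the paper covers by invoking \cite[Lemma 2.1]{Sarkar_2020}, so your argument closes once you cite that result rather than attempting to rederive it.
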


\begin{proof}
Let us assume that the movie $C$ ranges from time, $t = 0$ to $1$, with $T$ exhibiting mirror symmetry about $t=0.5$. First let us work with the case where $C$ has exactly one dot. Suppose that the dot is located at a point $q$ in the frame (link diagram) of $C$ at some time $\tau$, where $\tau < 0.5$. Thus the location of the dot in $C$ is described by the ordered pair $(\tau, q)$, where $\tau$ describes the time of the frame in which the dot is located, and $q$ specifies the location of the dot in that frame. Then in the movie $r(C)$, the dot has been reflected to the same point $q$ but this time located in the frame of $T$ at time $1-\tau$. For convenience, we will refer to the location of the dot in $r(C)$ by the ordered pair $(1-\tau,q)$. 

Because $C$ is a movie for a connected $4$-dimensional cobordism, there exists a way to move the dot from the original location $(\tau, q)$ to $(1-\tau,q)$ using one of two kinds of moves: 

1. making a dot commute with an elementary cobordism (Reidemeister move, birth, death, or saddle) happening away from the dot

2. moving a dot past a crossing in a link diagram in a frame of the movie  

That these two kinds of moves suffice is implied by \cite[Lemma 2.1]{Sarkar_2020}. Applying move 1 does not affect the induced $\BN$ map (this is easy to see). However, move 2 affects the induced $\BN$ map in the following way: it adds $\BN(hT)$ for every application of move 2. 

Now we will show that there exists a way to move the dot from $(\tau, q)$ to $(1-\tau,q)$ such that the number of applications of move 2 is an even number. We note that any way to move the dot from $(\tau, q)$ to $(1-\tau,q)$ must pass a point $q'$ in the $t=0.5$ frame. However, since $T$ is mirror symmetric about the $t=0.5$ frame, every application of moves 1 and 2 that went into moving the dot from $(\tau, q)$ to $(0.5, q')$ can be mirrored when moving the dot from $(0.5, q')$ to $(1- \tau, q)$. Thus the total number of applications of move 2 to take it from $(\tau, q)$ to $(1-\tau,q)$ in this manner is even. Thus the number of times $\BN(hT)$ gets added to the induced map in this process is an even number. So the induced map is unchanged when taking the dot from $(\tau, q)$ to $(1-\tau,q)$. The proof for the case when $C$ has more than $1$ dot follows identically as above, by moving the dots one at a time to their reflection about $t=0.5$.
\end{proof}

\begin{remark}
The above Symmetry Lemma wasn't needed in \cite{Sarkar_2020} since in Lee Homology a dot can move past crossings in a link diagram, without changing the induced map (up to an overall sign).
\end{remark}

Now we are in a position to prove Lemma \ref{main} and Theorem \ref{side}.

\begin{proof} Let $C':K'\to K$ be a ribbon concordance of knots with $d$ saddles, and let $C$ be its opposite cobordism. We will largely mirror the ideas of \cite [Main Theorem: Methods 1 and 2]{Sarkar_2020} with modifications to accommodate differences in the skein relations. A priori $C$ decomposes into a movie that proceeds as follows:

$C_1$) Reidemeister moves and planar isotopies

$C_2$) $d$ saddles

$C_3$) Reidemeister moves and planar isotopies

$C_4$) $d$ deaths 

$C_5$) Reidemeister moves and planar isotopies

However, as \cite{Sarkar_2020} remarks for Lee Homology, due to link invariance of $\BN$, it suffices to assume that $C$ is given by a movie described entirely by $C_4\circ C_3\circ C_2$. Similarly, $C'$ is described by $C_2' \circ C_3' \circ C_4'$, where $C_i'$ denotes doing $C_i$ in the reverse direction. Let $D$ be the cobordism formed by first doing $C$ and then $C'$, but that skips the deaths and the births in between. In other words, $D:=C_2'\circ C_3'\circ C_3\circ C_2$. Starting from $K$ and doing $C_3\circ C_2$ results in the link $K' \bigsqcup U^{d}$, where $U^{d}$ denotes $d$ copies of the unknot. Drawing on a trick from \cite{Sarkar_2020}, we may express $D$ as $C_2'\circ C_3'\circ \Id_{K'\bigsqcup U^{d}} \circ C_3\circ C_2$. 

Now we make use of the neck-cutting relation in $\BN$ Homology shown in Figure \ref{neck}. Even though this differs from the neck-cutting relation in Lee Homology that \cite{Sarkar_2020} appeals to, due to the Symmetry Lemma this will prove to be just as effective as we will now show: we may cut each of the $d$ necks present in the $\Id_{K'\bigsqcup U^{d}}$ cobordism, using the aforementioned neck-cutting relation in $\BN$ homology. Let $R$ be the cobordism formed after cutting all $d$ necks. Then $R$ is a sum of cobordisms, each of which is decorated by dots and/or scaled by powers of $h$. However, just as with \cite{Sarkar_2020}'s neck-cutting, here too the surface $S$ underlying each of the cobordisms in $R$ is the same: $S= C_4'\circ C_4$. We note that $D=C_2'\circ C_3'\circ R\circ C_3\circ C_2$. Thus as $R$ was a sum of cobordisms, each with underlying surface $S$ along with some dots and/or powers of $h$, $D$ too is a sum of cobordisms, each with underlying surface $T:=C_2'\circ C_3'\circ S\circ C_3\circ C_2$ decorated by some dots and/or powers of $h$. Let $\Sigma$ denote the sum of these cobordisms, each with underlying surface $T$. Note that $T$ is a connected surface equal to the cobordism $C'\circ C$, and it has a vertical plane that passes through its center about which it is symmetric. Now, in the sum $\Sigma$, for each dotted cobordism $\lambda$ with some dots and some power of $h$, there exists exactly one other dotted cobordism $\lambda'$ in the sum $\Sigma$ with just as many dots and the same power of $h$ such that the location of the dots is reflected about the vertical plane. All but one of the cobordisms in the sum $\Sigma$ can thus be ``paired" in this fashion. Due to the Symmetry Lemma, the ``paired" cobordisms add up to $0$ in $\Sigma$, and all that's left in $\Sigma$ is the lone undotted cobordism $h^dT$, which is equal to $h^dC_2'\circ C_3'\circ S\circ C_3\circ C_2$. Thus $\BN(D)=\BN(h^d(C'\circ C)$. This can be diagrammatically depicted as shown in Figure \ref{D}, where for the purposes of the diagram, $d$ has been taken to be $1$. 

However, similar to \cite{Sarkar_2020}, we note that $\BN(D)=\BN(C_2'\circ C_2)$, since $C_3'\circ C_3$ is isotopic to the identity cobordism. $C_2'\circ C_2$ first does $d$ saddles, each of which increases the number of components, and then immediately does the $d$ saddles in reverse. Now by applying Theorem \ref{Al2.3} $d$-many times, we conclude that $\BN(D)$ is simply given by $\BN(h^d \Id_{K})$. This can be diagrammatically depicted as shown in Figure \ref{alsoD}, where for the purposes of the diagram, $d$ has been taken to be $1$. Thus $\BN(h^dC'\circ C)=h^d\Id_{\BN(K)}$, completing the proof of Lemma \ref{main}.

Further, due to Theorem \ref{inj}, the image of $\BN(h^d(C'\circ C))$ is isomorphic to $h^d\BN(K')$. However, the image of $\BN(D)$ is equal to that of $\BN(h^d \Id_{K})$, which is equal to $h^d\BN(K)$. Thus $h^d\BN(K')$ is isomorphic to $h^d\BN(K)$, completing the proof of Theorem \ref{side} for the special case when there exists a ribbon concordance between $K$ and $K'$ with at most $d$ saddles. The general case of Theorem \ref{side1} when the ribbon distance between $K$ and $K'$ is $d$, follows trivially from this special case.
\end{proof}

\begin{figure}
    \centering
     \[
    \vcenter{\hbox{
        \begin{tikzpicture}[rotate=90,yscale=0.3]
          \draw[knot] (0,9) circle (1);
          \node[anchor=north] at (-1,9) {$K$};
          
          \draw[knot] (-1,6) arc (-180:0:0.5);
          \draw[knot] (0.2,6) arc (-180:0:0.4);
          \node[anchor=north] at (-1,6) {$\wt{K}$};
          
          \draw[knot] (-1.5,3) arc (-180:0:0.8);
          \draw[knot] (0.8,3) arc (-180:0:0.3);
          \node[anchor=north] at (-1.5,3) {$K'$};
          \node[anchor=north] at (0.8,3) {$U$};

          \draw[knot] (-1.5,0) arc (-180:0:0.8);
          \draw[knot] (0.8,0) arc (-180:0:0.3);
          \node[anchor=north] at (-1.5,0) {$K'$};
          \node[anchor=north] at (0.8,0) {$U$};
          
          \draw[knot] (-1,-3) arc (-180:0:0.5);
          \draw[knot] (0.2,-3) arc (-180:0:0.4);
          \node[anchor=north] at (-1,-3) {$\wt{K}$};
          
          \draw[knot] (-1,-6) arc (-180:0:1);
          \node[anchor=north] at (-1,-6) {$K$};

          \draw (-1,9) -- (-1,6);
          \draw (0,6) to[looseness=10,out=90,in=90] (0.2,6);
          \draw (1,9) -- (1,6);
          \node at (-0.5,7) {$C_2$};
          
          \draw (-1,6) to[looseness=1,out=-90,in=90] (-1.5,3);
          \draw (0,6) to[looseness=1,out=-90,in=90] (0.1,3);
          \draw (0.2,6) to[looseness=1,out=-90,in=90] (0.8,3);
          \draw (1,6) to[looseness=1,out=-90,in=90] (1.4,3);
          \node at (-0.5,4) {$C_3$};
          
          \draw (-1.5,3) -- (-1.5,0);
          \draw (0.1,3) -- (0.1,0);
          \draw (0.8,3) -- (0.8,0);
          \draw (1.4,3) -- (1.4,0);
          \node at (-0.5,1) {$\Id$};

          \draw (-1,-3) to[looseness=1,out=90,in=-90] (-1.5,0);
          \draw (0,-3) to[looseness=1,out=90,in=-90] (0.1,0);
          \draw (0.2,-3) to[looseness=1,out=90,in=-90] (0.8,0);
          \draw (1,-3) to[looseness=1,out=90,in=-90] (1.4,0);
          \node at (-0.5,-2) {$C'_3$};
          
          \draw (-1,-6) -- (-1,-3);
          \draw (0,-3) to[looseness=10,out=-90,in=-90] (0.2,-3);
          \draw (1,-6) -- (1,-3);
          \node at (-0.5,-5) {$C'_2$};
        \end{tikzpicture}}}
    =h
    \vcenter{\hbox{
        \begin{tikzpicture}[rotate=90,yscale=0.3]
          \draw[knot] (0,15) circle (1);
          \node[anchor=north] at (-1,15) {$K$};
          
          \draw[knot] (-1,12) arc (-180:0:0.5);
          \draw[knot] (0.2,12) arc (-180:0:0.4);
          \node[anchor=north] at (-1,12) {$\wt{K}$};
          
          \draw[knot] (-1.5,9) arc (-180:0:0.8);
          \draw[knot] (0.8,9) arc (-180:0:0.3);
          \node[anchor=north] at (-1.5,9) {$K'$};
          \node[anchor=north] at (0.8,9) {$U$};

          \draw[knot] (-1.5,6) arc (-180:0:0.8);
          \node[anchor=north] at (-1.5,6) {$K'$};

          \draw[knot] (-1.5,3) arc (-180:0:0.8);
          \node[anchor=north] at (-1.5,3) {$K'$};

          \draw[knot] (-1.5,0) arc (-180:0:0.8);
          \draw[knot] (0.8,0) arc (-180:0:0.3);
          \node[anchor=north] at (-1.5,0) {$K'$};
          \node[anchor=north] at (0.8,0) {$U$};
          
          \draw[knot] (-1,-3) arc (-180:0:0.5);
          \draw[knot] (0.2,-3) arc (-180:0:0.4);
          \node[anchor=north] at (-1,-3) {$\wt{K}$};
          
          \draw[knot] (-1,-6) arc (-180:0:1);
          \node[anchor=north] at (-1,-6) {$K$};

          \draw (-1,15) -- (-1,12);
          \draw (0,12) to[looseness=10,out=90,in=90] (0.2,12);
          \draw (1,15) -- (1,12);
          \node at (-0.5,13) {$C_2$};
          
          \draw (-1,12) to[looseness=1,out=-90,in=90] (-1.5,9);
          \draw (0,12) to[looseness=1,out=-90,in=90] (0.1,9);
          \draw (0.2,12) to[looseness=1,out=-90,in=90] (0.8,9);
          \draw (1,12) to[looseness=1,out=-90,in=90] (1.4,9);
          \node at (-0.5,10) {$C_3$};
          
          \draw (-1.5,9) -- (-1.5,6);
          \draw (0.1,9) -- (0.1,6);
          \draw (0.8,9) to[looseness=10,out=-90,in=-90] (1.4,9);
          \node at (-0.5,7) {$C_4$};

          \draw (-1.5,6) -- (-1.5,3);
          \draw (0.1,6) -- (0.1,3);
          \node at (-0.5,4) {$\Id_{K'}$};
          \node at (-1,3.6) {};

          \draw (-1.5,3) -- (-1.5,0);
          \draw (0.1,3) -- (0.1,0);
          \draw (0.8,0) to[looseness=10,out=90,in=90] (1.4,0);
          \node at (-0.5,1) {$C'_4$};

          \draw (-1,-3) to[looseness=1,out=90,in=-90] (-1.5,0);
          \draw (0,-3) to[looseness=1,out=90,in=-90] (0.1,0);
          \draw (0.2,-3) to[looseness=1,out=90,in=-90] (0.8,0);
          \draw (1,-3) to[looseness=1,out=90,in=-90] (1.4,0);
          \node at (-0.5,-2) {$C'_3$};
          
          \draw (-1,-6) -- (-1,-3);
          \draw (0,-3) to[looseness=10,out=-90,in=-90] (0.2,-3);
          \draw (1,-6) -- (1,-3);
          \node at (-0.5,-5) {$C'_2$};
        \end{tikzpicture}}}
    \qedhere
    \]    
    \caption{The following equality of cobordisms holds when one passes to $\BN$ Homology, owing to the neck-cutting relation in $\BN$ Homology and the Symmetry Lemma. Figure modified from \cite[page 10]{Sarkar_2020}}
    \label{D}
\end{figure}
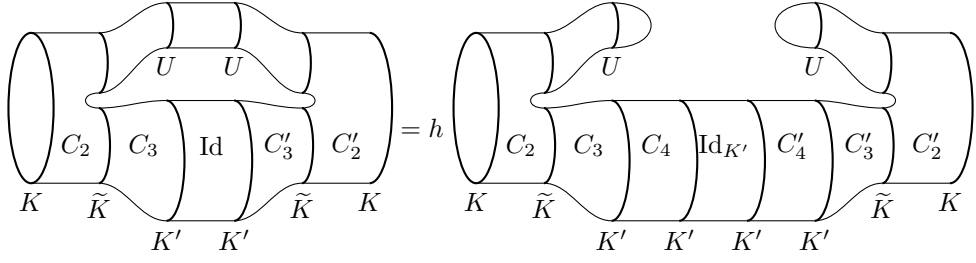

\begin{figure}
    \centering
    \[
    \vcenter{\hbox{
        \begin{tikzpicture}[rotate=90,yscale=0.3]
          \draw[knot] (0,6) circle (1);
          \node[anchor=north] at (-1,6) {$K$};
          
          \draw[knot] (-1,3) arc (-180:0:0.5);
          \draw[knot] (0.2,3) arc (-180:0:0.4);
          \node[anchor=north] at (-1,3) {$\wt{K}$};
          
          \draw[knot] (-1.5,0) arc (-180:0:0.8);
          \draw[knot] (0.8,0) arc (-180:0:0.3);
          \node[anchor=north] at (-1.5,0) {$K'$};
          \node[anchor=north] at (0.8,0) {$U$};
          
          \draw[knot] (-1,-3) arc (-180:0:0.5);
          \draw[knot] (0.2,-3) arc (-180:0:0.4);
          \node[anchor=north] at (-1,-3) {$\wt{K}$};
          
          \draw[knot] (-1,-6) arc (-180:0:1);
          \node[anchor=north] at (-1,-6) {$K$};

          \draw (-1,6) -- (-1,3);
          \draw (0,3) to[looseness=10,out=90,in=90] (0.2,3);
          \draw (1,6) -- (1,3);
          \node at (-0.5,4) {$C_2$};
          
          \draw (-1,3) to[looseness=1,out=-90,in=90] (-1.5,0);
          \draw (0,3) to[looseness=1,out=-90,in=90] (0.1,0);
          \draw (0.2,3) to[looseness=1,out=-90,in=90] (0.8,0);
          \draw (1,3) to[looseness=1,out=-90,in=90] (1.4,0);
          \node at (-0.5,1) {$C_3$};
          
          \draw (-1,-3) to[looseness=1,out=90,in=-90] (-1.5,0);
          \draw (0,-3) to[looseness=1,out=90,in=-90] (0.1,0);
          \draw (0.2,-3) to[looseness=1,out=90,in=-90] (0.8,0);
          \draw (1,-3) to[looseness=1,out=90,in=-90] (1.4,0);
          \node at (-0.5,-2) {$C'_3$};
          
          \draw (-1,-6) -- (-1,-3);
          \draw (0,-3) to[looseness=10,out=-90,in=-90] (0.2,-3);
          \draw (1,-6) -- (1,-3);
          \node at (-0.5,-5) {$C'_2$};
        \end{tikzpicture}}}
    = h\vcenter{\hbox{
        \begin{tikzpicture}[rotate=90,yscale=0.3]
          \draw[knot] (0,3) circle (1);
          \node[anchor=north] at (-1,3) {$K$};
          
          \draw[knot] (-1,0) arc (-180:0:1);
          \node[anchor=north] at (-1,0) {$K$};

          \draw (-1,0) -- (-1,3);
          \draw (1,0) -- (1,3);
          \node at (-0.5,1) {$\Id_{K}$};
          \node at (0,0.6) {};
        \end{tikzpicture}}}
    \]
    \caption{The following equality of cobordisms holds when one passes to $\BN$ Homology, owing to \cite[Corollary 2.3]{Alishahi_2019}. Figure modified from \cite[page 9]{Sarkar_2020}.}
    \label{alsoD}
\end{figure}
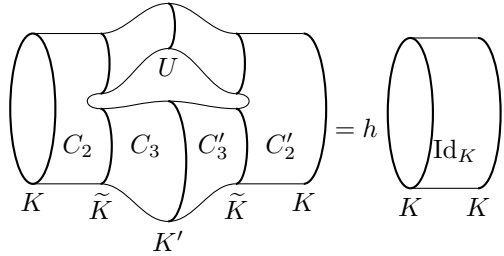


\begin{theorem}
\label{chain}
If $d$ is the ribbon distance between two knots that are represented by knot diagrams $M_0$ and $M_n$, then there exist chain maps between the Bar-Natan complexes $u:C_{\BN}(M_0)\to C_{\BN}(M_n)$ and $v:C_{\BN}(M_0)\to C_{\BN}(M_n)$, such that both $u\circ v$ and $v\circ u$ induce multiplication by $h^d$ in $\BN$ Homology.
\end{theorem}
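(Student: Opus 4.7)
The theorem is the chain-level refinement of Theorem~\ref{side}: instead of the abstract isomorphism $h^d\BN(M_0)\cong h^d\BN(M_n)$, it exhibits explicit chain maps whose compositions induce multiplication by $h^d$ on homology. My plan is to interpret $v$ as a map $C_{\BN}(M_n)\to C_{\BN}(M_0)$ (otherwise the compositions $u\circ v$ and $v\circ u$ would not make sense as endomorphisms), to build $u$ and $v$ from the cobordism-induced chain maps along a sequence realizing the ribbon distance, and then to invoke Theorem~\ref{side} to verify the required behavior of the compositions.

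Let $M_0=K_0,K_1,\ldots,K_n=M_n$ be a sequence realizing the ribbon distance, with a ribbon concordance between each consecutive pair (in some direction) using at most $d$ saddles. For each $i$, the ribbon concordance and its opposite cobordism furnish chain maps $u_i\colon C_{\BN}(K_i)\to C_{\BN}(K_{i+1})$ and $v_i\colon C_{\BN}(K_{i+1})\to C_{\BN}(K_i)$, orienting $u_i$ upward in the sequence regardless of which way the ribbon concordance points. The single-step case of Theorem~\ref{side} says that the restrictions $u_i|_{h^d\BN(K_i)}$ and $v_i|_{h^d\BN(K_{i+1})}$ are mutually inverse isomorphisms between $h^d\BN(K_i)$ and $h^d\BN(K_{i+1})$. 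Composing across the sequence, the chain maps $u=u_{n-1}\circ\cdots\circ u_0$ and $v=v_0\circ\cdots\circ v_{n-1}$ also restrict to mutually inverse isomorphisms on the $h^d$-submodules.

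Now set $\tilde u=u$ and $\tilde v=h^d\cdot v$ (any split of the factor $h^d$ between $u$ and $v$ will do). Since $(v\circ u)|_{h^d\BN(M_0)}=\Id$ and chain maps are $\mathbb{F}_2[h]$-linear, I compute $\tilde v\circ\tilde u (x) = h^d(v\circ u)(x) = (v\circ u)(h^d x) = h^d x$ for every $x\in\BN(M_0)$; thus $\tilde v\circ\tilde u$ induces multiplication by $h^d$ on $\BN(M_0)$. The symmetric calculation, using $(u\circ v)|_{h^d\BN(M_n)}=\Id$, shows that $\tilde u\circ\tilde v$ induces multiplication by $h^d$ on $\BN(M_n)$. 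The main obstacle here is verifying that the single-step isomorphism of Theorem~\ref{side} is realized precisely by the restrictions of the cobordism-induced chain maps, rather than by some other abstract isomorphism; this amounts to unpacking the proof of Theorem~\ref{side} (via Lemma~\ref{main}, the neck-cutting relation, and the Symmetry Lemma) and observing that the isomorphism constructed there is exactly the restriction to $h^d$-parts of the $\BN$-maps induced by the ribbon concordance and its opposite.
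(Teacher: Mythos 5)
Your proposal is correct and follows essentially the same route as the paper: build the chain maps $u,v$ by composing the chain maps induced by the sequence of ribbon concordances (and their opposites), absorb a factor of $h^d$ into one of them, and telescope. You are also right that the theorem statement contains a typo and $v$ must be a map $C_{\BN}(M_n)\to C_{\BN}(M_0)$ for the compositions to make sense.

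The one point worth tightening is exactly the caveat you flag at the end. As written, Theorem~\ref{side} asserts only the existence of an abstract isomorphism $h^d\BN(K)\cong h^d\BN(K')$, so citing its ``single-step case'' to conclude that the cobordism-induced maps $u_i,v_i$ restrict to mutually inverse isomorphisms on the $h^d$-submodules is not quite a direct application. The paper avoids this by bypassing Theorem~\ref{side} entirely and invoking Theorem~\ref{inj} and Lemma~\ref{main} directly: if $C_i$ is the ribbon concordance with opposite $C_i'$, Theorem~\ref{inj} gives $\BN(C_i')\circ\BN(C_i)=\Id$, and Lemma~\ref{main} gives $h^d\BN(C_i)\circ\BN(C_i')=h^d\Id$, which together yield exactly your ``mutually inverse on $h^d$-parts'' claim; the paper then telescopes from the middle factor outward. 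Your version telescopes at the level of restrictions to $h^d$-submodules, which is equivalent (by $\FF_2[h]$-linearity of the induced maps, an element of $h^d\BN(K_i)$ stays in the $h^d$-submodule under $u_i$, so the middle cancellations are licensed). Net effect: same argument, with the factor $h^d$ placed on $v$ rather than on $u$ as in the paper, which is immaterial. Just replace the appeal to ``Theorem~\ref{side}, single-step case'' with direct appeals to Theorem~\ref{inj} and Lemma~\ref{main} and the proof is clean.
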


\begin{proof}

Suppose that $d$ is the ribbon distance between the knots $M_0$ and $M_n$. So there exists a sequence of knot diagrams $M_0,...,M_n$ such that $M_i$ is related to $M_{i+1}$ via a movie for a ribbon concordance $C_i:M_i\to M_{i+1}$, or $C_i:M_{i+1}\to M_i$ with at most $d$ saddles. If $C_i$ denotes the movie for a ribbon concordance, its opposite movie will be called $C'_i$. Thus we have a movie $f=D_{n-1}\circ D_{n-2}\circ...\circ D_0$, where $D_j:M_j\to M_{j+1}$ takes on the value $C_j$ or $C'_j$ for each $j$. Similarly, we have a movie $g=E_0 \circ ...\circ E_{n-2}\circ E_{n-1}$, where $E_j:M_{j+1}\to M_{j}$ takes on the value $C_j$ or $C'_j$ for each $j$. Thus $f:M_0\to M_n$ and $g:M_n\to M_0$. Let $f'$ be the movie $h^df$. 

Let $u:C_{\BN}(M_0)\to C_{\BN}(M_n)$ be the chain map induced by $f'$ between the Bar-Natan complexes of $M_0$ and $M_n$ and let $v:C_{\BN}(M_n)\to C_{\BN}(M_0)$ be the chain map induced by $g$ between the Bar-Natan complexes of $M_n$ and $M_0$. Now let $F:\BN(M_0)\to \BN(M_n)$ be the map induced by $f'$ on Bar-Natan Homology. Let $G:\BN(M_n)\to \BN(M_0)$ be the map induced by $g$ on Bar-Natan Homology. We wish to show that $FG$ and $GF$ are multiplication by $h^d$. It suffices to prove the statement for $FG$. We will start from the middle of the movie. That is, we will first consider the part of $FG$ that is induced by the movie $M_{n-1}\to M_{n-1}$ given by $h^d E_{n-1}\circ D_{n-1}$.

\textbf{Case 1:} $D_{n-1}$ is the movie for a ribbon concordance, and $E_{n-1}$ is its opposite movie. In this case, $E_{n-1}\circ D_{n-1}$ induces the identity on $\BN(M_{n-1})$ by Theorem \ref{inj}. Thus, $h^d E_{n-1}\circ D_{n-1}$ induces $h^d \Id_{\BN(M_{n-1})}$.

\textbf{Case 2:} $D_{n-1}$ is the movie for the opposite of a ribbon concordance, and $E_{n-1}$ is the movie for the ribbon concordance. Then by Lemma \ref{main} $h^d E_{n-1}\circ D_{n-1}$ induces $h^d \Id_{\BN(M_{n-1})}$

With this done, we will next show that $h^d E_{n-2}\circ E_{n-1} \circ D_{n-1}\circ D_{n-2}$ induces $h^d \Id_{\BN(M_{n-2})}$. Since we already know that $h^d E_{n-1}\circ D_{n-1}$ induces $h^d \Id_{\BN(M_{n-1})}$, it suffices to show that $h^d E_{n-2}\circ D_{n-2}$ induces $h^d \Id_{\BN(M_{n-2})}$, which once again follows from the identical sort of casework shown above. Thus it is clear that $FG$ is given by $h^d \Id$.
\end{proof}

Now we will need one last fact in order to prove Theorem \ref{bound}. This fact is a special case of \cite[Lemma 3.1]{Alishahi_2019}. 

\begin{lemma}
\label{ext}
Let $L$ and $L'$ be knot diagrams. Suppose that there exist chain maps between the Bar-Natan complexes $u:C_{\BN}(L)\to C_{\BN}(L')$ and $v:C_{\BN}(L')\to C_{\BN}(L)$ such that $u\circ v$ and $v \circ u$ induce multiplication by $h^n$ on Bar-Natan homology. Then $|\mu(L)-\mu(L')|\leq n$.
\end{lemma}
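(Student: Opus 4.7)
The plan is to pass to the induced maps on homology and track torsion orders under the composition $VU = h^n \cdot \Id$. Let $U\co \BN(L)\to\BN(L')$ and $V\co\BN(L')\to\BN(L)$ denote the maps induced by $u$ and $v$. The hypothesis says $UV$ and $VU$ are both multiplication by $h^n$, and since $U,V$ are graded maps that commute with the $h$-action, we can chase classes of maximal $h$-torsion order.

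Concretely, I would first prove the one-sided bound $\mu(L)\leq \mu(L')+n$ as follows. Pick a class $\gamma\in \BN(L)$ that realizes the $h$-torsion order, so that writing $m=\mu(L)$ we have $h^{m}\gamma=0$ but $h^{m-1}\gamma\neq 0$. Set $\delta:=U(\gamma)\in\BN(L')$. Then $h^{m}\delta=U(h^{m}\gamma)=0$, so $\delta$ is $h$-torsion (or zero), and it has some order $k\leq m$. The key step is a lower bound on $k$: applying $V$ gives
\[
V(\delta)=VU(\gamma)=h^{n}\gamma,
\]
and then
\[
0=V(h^{k}\delta)=h^{k}V(\delta)=h^{k+n}\gamma.
\]
Since $h^{m-1}\gamma\neq 0$, we must have $k+n\geq m$, hence $k\geq m-n$. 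Because $\delta$ is $h$-torsion of order $k$, the definition of $\mu(L')$ forces $\mu(L')\geq k\geq \mu(L)-n$. (If $m\leq n$, the inequality $\mu(L)-\mu(L')\leq n$ is already vacuous.)

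By the symmetric argument, starting from a class in $\BN(L')$ realizing $\mu(L')$ and using $UV=h^{n}\cdot\Id$, one gets $\mu(L)\geq \mu(L')-n$. Combining the two inequalities yields $|\mu(L)-\mu(L')|\leq n$, which is the claim.

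The only non-routine point is the lower bound $k\geq m-n$ on the order of $U(\gamma)$, and this is where the hypothesis $VU=h^{n}\cdot\Id$ is used essentially. Everything else is formal manipulation of torsion orders under an $\mathbb{F}_2[h]$-linear map, so no further ingredients from Bar-Natan homology are needed beyond the fact that $U$ and $V$ commute with the $h$-action; this is immediate from the construction of the Bar-Natan chain complex as an $\mathbb{F}_2[h]$-module complex on which chain maps act $\mathbb{F}_2[h]$-linearly on homology.
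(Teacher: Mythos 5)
Your argument is correct. The paper itself does not give a proof of this lemma; it simply cites \cite[Lemma 3.1]{Alishahi_2019}. What you have written is the natural torsion-chasing argument that underlies that citation, and it fills in the details the paper leaves out. Concretely: you pick $\gamma$ realizing $\mu(L)=m$, push it forward to $\delta = U(\gamma)$ of order $k$, and use $h^{k}V(\delta)=h^{k+n}\gamma=0$ together with $h^{m-1}\gamma\neq 0$ to get $k\geq m-n$, hence $\mu(L')\geq\mu(L)-n$; the other inequality follows by symmetry from $UV=h^n$. The edge case $\delta=0$ is correctly disposed of, since it forces $m\leq n$ and the bound is then vacuous. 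The only ingredient beyond formal manipulation is $\FF_2[h]$-linearity of $U$ and $V$, which, as you note, holds because $h$ lies in the ground ring and chain maps of $\FF_2[h]$-module complexes induce $\FF_2[h]$-linear maps on homology. It is worth observing that your proof is precisely the Bar-Natan specialization of the argument the paper \emph{does} write out for Lemma \ref{ext1} in the $\A$-homology setting; there an extra connectedness hypothesis and Theorem \ref{indiff} are needed to make $X_*$ commute with the induced maps up to sign, whereas in the Bar-Natan case $h$ is a central scalar and that complication disappears, exactly as you remark. (Implicit in picking $\gamma$ is that $\mu(L)$ is finite and achieved; this holds because $\BN(L)$ is a finitely generated $\FF_2[h]$-module, and the paper also takes this for granted.)
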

\begin{proof}
This follows as a special case of Lemma 3.1 of \cite{Alishahi_2019}.
\end{proof}

Thus due to Theorem \ref{chain} and Lemma \ref{ext} we have proven Theorem \ref{bound}.

\section{Bounding the ribbon distance via $\A$-homology}
\label{prelim}

The $\A$-homology of a link is as defined in Section \ref{Adefn}. The $(R_{\A},A_{\A})$ extension described in Section \ref{Adefn} is associated with a neck-cutting relation shown in Figure \ref{skein1}. The map induced on $\A$-homology by a $4$-dimensional cobordism $C$ or its movie $M$ is written $\A(C)$ or $\A(M)$ respectively. When necessary, we will denote the the $\A$-complex of a link diagram $L$ by $C_{\A}(L)$. 

\begin{figure}
    \centering
    $\cfCircleId{0.8} =
\cfCircleSplit{0.8}{\circled{1}}{}
+
\cfCircleSplit{0.8}{}{\circled{2}}
=
\cfCircleSplit{0.8}{\circled{2}}{}
+
\cfCircleSplit{0.8}{}{\circled{1}}$ 
    \caption{The neck-cutting relation in $\A$-Homology; figure taken from \cite[page 9]{khovanov2020link}}
    \label{skein1}
    \end{figure}
    
If $C:K_0\to K_1$ is a movie for a $4$-dimensional cobordism between link diagrams $K_0$ and $K_1$, then it is easy to see that placing a $\circled{1}$, or $\circled{2}$, or $*$ at some point of the movie defines a chain map between the $\A$-complexes of $K_0$ and $K_1$. This is because of the fact that when computing the induced $\A$-homology map, the maps induced by $\circled{1}$, or $\circled{2}$, or $*$ commute with the map induced by a saddle. Now we will describe a theorem (alluded to in Section \ref{Adefn}) that states that a $*$ can be moved around freely on a movie of a connected $4$-dimensional cobordism, without affecting the induced $\A$-homology map up to an overall sign.    
    
\begin{theorem}
\label{indiff}
Suppose that $C$ and $D$ are movies for $4$-dimensional cobordisms with the same underlying movie barring the $*$'s on them, such that $C$ and $D$ have the same number of $*$'s on each component, but differ only in the location of $*$'s. Then $\A(C)= \pm \A(D)$.
\end{theorem}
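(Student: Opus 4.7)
The plan is to mirror the strategy of Lemma~\ref{mirror} while exploiting the fact that the analog of the cost $h$ in $\BN$ is, for $X_*$ in $\A$-homology, merely a sign. Concretely, since $X_* = 2X - (\alpha_1+\alpha_2)$, if one can prove the $\A$-analog of Theorem~\ref{Al2.1}, namely that for points $p, q$ on either side of a crossing in a link diagram $L$ the multiplication chain maps $X_p, X_q$ on $C_\A(L)$ satisfy $X_p + X_q \simeq (\alpha_1+\alpha_2)\,\Id$ up to chain homotopy, then immediately $X_{*,p} + X_{*,q} \simeq 0$, i.e.\ $X_{*,p} \simeq - X_{*,q}$ on $\A$-homology. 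This stronger cancellation (as opposed to the additive $+h$ term in $\BN$) converts the cost of pushing a $*$ past a crossing from an additive correction into a mere sign flip, which is exactly what the statement of Theorem~\ref{indiff} requires.

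The anticipated main obstacle is this local step: establishing $X_p + X_q \simeq (\alpha_1+\alpha_2)\,\Id$ at a crossing. The plan is to exhibit an explicit chain homotopy on the two-term local $\A$-complex associated to the crossing, using the Frobenius identity $X^2 = (\alpha_1+\alpha_2)X - \alpha_1\alpha_2$ in $A_\alpha$ together with the comultiplication formulas recorded in Section~\ref{Adefn}. This parallels the computation in \cite[Lemma~2.1]{Alishahi_2019} for $\BN$ and is the only genuinely new local calculation needed.

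With the local lemma in hand, the remainder is formally routine. Placing a $*$ at a fixed location defines a chain endomorphism that commutes with the chain map of any elementary cobordism step (planar isotopy, Reidemeister move, birth, death, or saddle) occurring in a disjoint region of the frame, so such moves do not affect the induced $\A$-homology map. Since the underlying $4$-dimensional cobordism is connected, \cite[Lemma~2.1]{Sarkar_2020} lets us move the $*$ from its position in $C$ to its position in $D$ through a finite sequence of two kinds of moves: (i) commutation past a disjoint elementary cobordism, which leaves $\A(C)$ unchanged; and (ii) pushing the $*$ across a crossing in a frame, which by the local lemma multiplies $\A(C)$ by $-1$.

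For movies carrying more than one $*$ on a given component, the plan is to move the $*$'s one at a time to their prescribed locations, using the fact that the chain maps induced by distinct $*$'s commute with one another. The cumulative sign arising from all crossing passes is absorbed into the overall $\pm$ in the statement $\A(C) = \pm \A(D)$, completing the argument.
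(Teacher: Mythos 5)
Your proposal is correct and takes essentially the same approach as the paper: the paper reduces the claim to \cite[Lemma~2.1]{Sarkar_2020} together with a local crossing lemma (Theorem~\ref{moving}), which it states as $\circled{1}_p + \circled{2}_q = 0$ and proves via exactly the mapping-cone/chain-homotopy argument paralleling \cite[Lemma~2.1]{Alishahi_2019} that you anticipate. Since $\circled{i} = X - \alpha_i$, the paper's relation is arithmetically identical to your formulation $X_p + X_q \simeq (\alpha_1+\alpha_2)\Id$, and the downstream sign-flip consequence $X_{*,p} \simeq -X_{*,q}$ is the paper's Theorem~\ref{*move}.
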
    
\begin{proof}
The proof makes use of \cite[Lemma 2.1]{Sarkar_2020}, which essentially states that $C$ and $D$ are related by one of three kinds of movie moves:

1) Adding a $*$ commutes with any other $4$-dimensional elementary cobordism that occurs away from the $*$

2) Adding a $*$ commutes with adding another $*$

3) A $*$ can be moved past a crossing in any link diagram featuring in the movies

It is easy to see why the first two kinds of movie moves do not affect the induced $\A$-map up to an overall sign. That the third movie move does not affect the induced $\A$ map (up to an overall sign) will follow from Theorem \ref{*move} stated later. 
\end{proof}

For a movie $C$ of a connected $4$-dimensional cobordism, we let ``$X_*C$" denote a movie formed by placing a $*$ anywhere along the movie $C$. Then by virtue of Theorem \ref{indiff} $\A(X_*C)$ is well-defined up to an overall sign. Next we remind the reader that just as with other Khovanov perturbations, the map induced on $\A$-homology by a ribbon concordance is injective. 

\begin{theorem}
\label{inj1}
If $C:K\to K'$ is a ribbon concordance and $C':K'\to K$ is it opposite cobordism, then $\A(C' \circ C)=\Id_{\A(K)}$.
\end{theorem}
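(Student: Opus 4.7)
The plan is to mirror the proof of Theorem \ref{inj} almost verbatim, adapting the strategy of \cite[Proposition 7]{LevineZemke} to $\A$-homology. The two ingredients from $\A$-homology that replace \cite[Proposition 6, Part 1]{LevineZemke} are the evaluations of unknotted unlinked spheres. Using the counit of the $\A$-Frobenius algebra, one computes $\epsilon(1)=0$ and $\epsilon(X_{\circled{i}})=\epsilon(X-\alpha_i)=1$ for $i\in\{1,2\}$. Consequently, an undecorated, unknotted, unlinked sphere induces the zero map on $\A$-homology, while a sphere decorated by a single $\circled{1}$ or $\circled{2}$ induces the identity (i.e.\ multiplication by $1$).

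With these sphere evaluations in hand, the \cite{LevineZemke} argument applies: after an isotopy pairing each saddle of $C$ with its reverse counterpart in $C'$, the composition $C'\circ C$ can be viewed as $\Id_K$ together with some unknotted unlinked spheres attached to $\Id_K$ by tubes. Applying the $\A$-homology neck-cutting relation of Figure \ref{skein1} to each such tube produces a sum of two terms, and in each term exactly one of the two resulting caps carries a $\circled{1}$ or $\circled{2}$ decoration while the other cap is plain. In the term where the decoration falls on the $\Id_K$ side, the resulting closed unknotted sphere is left undecorated and evaluates to $0$, so the term vanishes. In the surviving term, where the decoration falls on the sphere side, the sphere evaluates to $1$ and leaves $\Id_K$ unchanged. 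Multiplying over all tubes, the unique surviving contribution is $\Id_{\A(K)}$, giving the desired conclusion.

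The main (minor) obstacle is confirming that the neck-cutting of Figure \ref{skein1} really does put a decoration on exactly one of the two new caps in every term, so that the ``wrong-side'' terms are guaranteed to die via the undecorated sphere evaluation. This is immediate from the skein relation. In fact, the bookkeeping here is strictly simpler than in Theorem \ref{inj}: the $\A$-homology neck-cutting has only two terms per cut and no analog of the $h$-scaled undecorated term present in Bar-Natan, so no appeal to a Symmetry Lemma is required for this particular statement. Finally, the two equivalent expressions of the neck-cutting in Figure \ref{skein1} (with the roles of $\circled{1}$ and $\circled{2}$ swapped) yield the same conclusion, consistent with Theorem \ref{indiff}.
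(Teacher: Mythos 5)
Your proposal is correct and takes essentially the same route that the paper does: the paper's proof of Theorem~\ref{inj1} is a two-line reference to \cite[Proposition~7]{LevineZemke} and to the argument outlined for Theorem~\ref{inj}, and you have supplied exactly the details that reference invokes. Your counit computations $\epsilon(1)=0$ and $\epsilon(X-\alpha_i)=1$ are right (they are forced by the counit axiom applied to $\Delta(1)=(X-\alpha_1)\otimes 1 + 1\otimes(X-\alpha_2)$), and the neck-cutting bookkeeping is handled correctly: in each of the two terms exactly one cap is decorated, so one term produces a decorated unknotted sphere (evaluating to $1$) and the other an undecorated one (evaluating to $0$). One small imprecision: you contrast this with Theorem~\ref{inj} as though a Symmetry Lemma were needed there, but the paper's proof of Theorem~\ref{inj} also avoids the Symmetry Lemma --- the extra $h$-scaled undecorated term in the Bar-Natan neck-cutting again leaves an undecorated sphere and so vanishes by the same evaluation. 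The Symmetry Lemmas are only needed for Lemmas~\ref{main} and~\ref{main1}, not for the ribbon-concordance injectivity statements.
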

\begin{proof}

The proof follows nearly identically to the proof for Khovanov Homology shown in \cite[Proposition 7]{LevineZemke}, or the argument outlined for Theorem \ref{inj}.

\end{proof}

The following is the main lemma that will be used to prove Theorem \ref{bound1}.

\begin{lemma}
\label{main1}
Let $C':K'\to K$ be a ribbon concordance of knots with $d$ saddles, and let $C$ be its opposite cobordism. Then 
    $\A(X_*^d C'\circ C)=\pm \A(X_*^d \Id_K)$
\end{lemma}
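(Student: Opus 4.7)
The plan is to mirror the proof of Lemma \ref{main}, accounting for three structural differences in $\A$-homology: (i) the neck-cutting relation of Figure \ref{skein1} produces only two dotted terms per cut rather than three; (ii) individual $\circled{1}$ and $\circled{2}$ dots cannot be moved freely as the scalar $h$ could, though a $*$ mark can be relocated up to an overall sign by Theorem \ref{indiff}; and (iii) cancellations must carry controlled signs rather than being automatic over $\FF_2$.

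By link-invariance of $\A$-homology, I would first reduce to the case $C = C_4 \circ C_3 \circ C_2$ and $C' = C_2' \circ C_3' \circ C_4'$, and set $D := C_2' \circ C_3' \circ \Id_{K' \sqcup U^d} \circ C_3 \circ C_2$. The strategy is to compute $\A(D)$ in two ways and compare. First, the isotopy $C_3' \circ C_3 \simeq \Id$ gives $\A(D) = \A(C_2' \circ C_2)$; since $\mu \circ \Delta(1) = (X - \A_1) + (X - \A_2) = X_*$ in the Frobenius algebra $A_\A$, the composition $C_2' \circ C_2$ (consisting of $d$ component-increasing saddles immediately followed by their reverses) induces multiplication by $X_*^d$, so $\A(D) = \A(X_*^d \Id_K)$. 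This is the $\A$-analog of Theorem \ref{Al2.3}.

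Second, cutting the $d$ inner necks via Figure \ref{skein1} expresses $\A(D) = \sum_{I \subseteq \{1, \ldots, d\}} \A(\xi_I)$, where each $\xi_I$ is supported on the connected, mirror-symmetric surface $T := C' \circ C$ with a $\circled{1}$ on the $i$-th top cap for $i \in I$ and a $\circled{2}$ on the $i$-th bottom cup for $i \notin I$. Since the $i$-th bottom cup is the mirror image of the $i$-th top cap about the central level of $T$, applying the $\A$-Symmetry Lemma (Theorem \ref{mirror1}) to each $\circled{2}$ dot individually transports it onto the corresponding cap, yielding $\A(\xi_I) = \A(\eta_I)$, where $\eta_I$ carries all $d$ dots on the $d$ caps. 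Summing and using $\circled{1} + \circled{2} = *$ collapses $\sum_I \A(\eta_I)$ into $\A$ of $T$ decorated with $d$ stars (one on each cap); Theorem \ref{indiff} then moves these stars to arbitrary positions on the connected $T$ at the cost of an overall sign, producing $\pm \A(X_*^d (C' \circ C))$. Equating the two computations yields $\A(X_*^d (C' \circ C)) = \pm \A(X_*^d \Id_K)$.

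The main obstacle is sign bookkeeping. Unlike the Bar-Natan argument, where the Symmetry Lemma yields automatic cancellations over $\FF_2$, here Theorem \ref{mirror1} must be applied to one dot at a time: the key point is that the symmetric reflection path in $T$ crosses an even number of crossings, and each such crossing converts a $\circled{1}$ or $\circled{2}$ dot via the substitution $\circled{1}_p = -\circled{2}_q$ (and vice versa), so that even parity restores the original dot type with no net sign. Verifying that these individual reflections accumulate no subset-dependent signs, and that the final application of Theorem \ref{indiff} delivers a single coherent $\pm$, is the essential new technical work beyond the Bar-Natan proof.
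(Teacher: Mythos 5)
Your proposal is correct and follows essentially the same strategy as the paper: reduce to $D = C_2'\circ C_3'\circ \Id_{K'\sqcup U^d}\circ C_3\circ C_2$, compute $\A(D)$ once via $C_3'\circ C_3\simeq\Id$ and Theorem \ref{splitsaddle}, and again via neck-cutting plus the Symmetry Lemma (Theorem \ref{mirror1}) and Theorem \ref{indiff}, then equate. The one expositional difference is that you cut all $d$ necks at once (giving $2^d$ digited cobordisms $\xi_I$ indexed by subsets $I$), whereas the paper cuts iteratively and shows $\A(S_i)=\pm\A(X_*^i T_i)$ at each stage; both variants rely on the same key lemmas, and both implicitly invoke the Symmetry Lemma for movies carrying more than one decoration at a time (the paper's version is stated only for singly digited movies), a small extension that works because digits commute and the reflection argument counts crossing-passes for one digit at a time. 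Minor nits: the justification ``$\mu\circ\Delta(1)=X_*$'' is too naive to give $\A(C_2'\circ C_2)$ directly on a link diagram — one needs Theorem \ref{saddletheorem} together with Theorem \ref{moving} to collect the two digits at a single point, which is exactly what Theorem \ref{splitsaddle} records and what you then cite; and $\A(D)=\A(X_*^d\Id_K)$ should carry a $\pm$ from Theorem \ref{splitsaddle}, though this is absorbed into the final sign.
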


Lemma \ref{main1} will be proved in a bit. First we will state a few facts about $\A$-Homology that we will need to prove Lemma \ref{main1}. 

\begin{theorem}
\label{saddletheorem}
Let $f:K\to L$ be an oriented saddle move performed on a link diagram $K$. Let $f':L\to K$ denote the saddle in reverse done on the link diagram $L$. Let $q$ and $q'$ be the connecting points of $f$ on the link diagram $K$. Then $\A(f' \circ f)= \circled{1}_q + \circled{2}_{q'}=\circled{2}_q + \circled{1}_{q'}$, where $\circled{1}_q$ denotes the map induced by $\A$-homology when $\circled{1}$ is placed at the location $q$ on $K$, and similarly for $\circled{2}_{q'}$.
\end{theorem}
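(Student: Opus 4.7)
My plan is to compute $\A(f'\circ f)$ by applying the $\A$-homology neck-cutting relation of Figure \ref{skein1} directly to the underlying surface of $f'\circ f$. Since each saddle has Euler characteristic $-1$, the cobordism $f'\circ f\colon K\to K$ has Euler characteristic $-2$; it is therefore, topologically, the identity cobordism $K\times I$ with a single $1$-handle attached. The two feet of this handle are small disks on $K\times I$ lying near the connecting points $q$ and $q'$, and the core of the handle is an arc joining $q$ to $q'$ that traverses the middle of the saddle-plus-reverse-saddle region.

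I would next cut the neck of this $1$-handle along a circle in the middle of the tube and invoke Figure \ref{skein1}. This expresses the tube as a sum of two configurations: in each one, the two feet of the handle are capped off by disks, with one cap carrying a $\circled{1}$ (respectively $\circled{2}$) decoration and the other carrying none. Regluing the capped-off pieces fills in the two small disks in $K\times I$ that were removed by the feet of the handle, and so restores the identity cobordism; a dot sitting on an interior cap is, via the standard Frobenius-algebra/TQFT dictionary, identified with the operator $\circled{i}$ acting at the corresponding foot of the (former) handle. This yields
\[
\A(f'\circ f) \;=\; \circled{1}_q + \circled{2}_{q'},
\]
where the assignment of $\circled{1}$ to $q$ rather than to $q'$ reflects the choice of which side of the belt circle receives the $\circled{1}$ decoration.

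The second equality $\circled{1}_q + \circled{2}_{q'} = \circled{2}_q + \circled{1}_{q'}$ is then a straightforward algebraic identity: since $\circled{i}_p = X_p - \alpha_i\Id$, both sides reduce to $X_q + X_{q'} - (\alpha_1+\alpha_2)\Id$ as operators on $\A(K)$. Equivalently, this equality reflects the symmetry of the neck-cut under the two possible choices of orientation of the cut circle.

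The main obstacle I foresee is in rigorously justifying the identification ``an interior cap decorated by $\circled{i}$ equals the operator $\circled{i}$ at the corresponding point.'' This is a standard move in Khovanov-type TQFTs, but some care is needed in the $\A$-homology setting because dots cannot be moved freely across crossings (as alluded to in the discussion preceding Theorem \ref{indiff}); nevertheless, since the dots produced by the neck-cut land \emph{precisely} at the feet of the former handle near $q$ and $q'$, no such movement is required. A secondary point worth confirming is that the $1$-handle description of $f'\circ f$ holds regardless of whether the saddle increases or decreases the number of components of $K$, but both cases are forced by the Euler-characteristic computation above.
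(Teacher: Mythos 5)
Your proposal is correct and takes essentially the same approach as the paper: the paper's proof consists of a single sentence citing the neck-cutting relation of Figure \ref{skein1}, and your argument is a careful unpacking of exactly that step (identify $f'\circ f$ as $K\times I$ with a $1$-handle attached via the Euler characteristic count, cut the belt circle of the handle, and read off the two dotted identity cobordisms). The one shortcut worth noting is that the second equality does not even need the algebraic computation $\circled{i}_p = X_p - \alpha_i\Id$: Figure \ref{skein1} already displays the neck-cut in both forms, so both expressions for $\A(f'\circ f)$ drop out directly from the two choices of how to distribute the dots in the relation.
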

\begin{proof}
The proof follows from the neck-cutting relation in $\A$-homology shown in Figure \ref{skein1}.
\end{proof}

In $\A$-Homology, when a $\circled{1}$ is moved past a crossing in a link diagram it changes to $-\circled{2}$, as described by the following theorem. 

\begin{theorem} \label{moving}
Given a link diagram $L$, and points $p$ and $q$ on either side of a crossing $c$ in $L$. Let $\circled{j}_i$ for $j\in \{1,2\}$ and $i\in \{p,q\}$ mean the map induced on $\A$-homology when a $\circled{j}$ is placed at the point $i$ on $L$. Then in $\A$-homology, $\circled{1}_{q}+\circled{2}_{q'}=0$.
\end{theorem}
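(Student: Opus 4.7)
My plan is to mirror the structure of Alishahi's proof of the analogous Bar-Natan identity in \cite[Lemma 2.1]{Alishahi_2019}, replacing the Bar-Natan neck-cutting relation by its $\A$-homology counterpart (Figure \ref{skein1}, equivalently Theorem \ref{saddletheorem}). The aim is to exhibit an explicit chain-level null-homotopy of $\circled{1}_p + \circled{2}_q$ on $C_{\A}(L)$, where for readability I am using $p, q$ for the two points on either side of the crossing $c$.

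First I would invoke the cube-of-resolutions presentation of $C_{\A}(L)$ at the crossing $c$, which expresses it as a mapping cone $C_{\A}(L_0) \xrightarrow{\sigma} C_{\A}(L_1)$, where $L_0, L_1$ are the two resolutions of $c$ and $\sigma$ is the chain map induced by the saddle cobordism at $c$ (the order of $L_0, L_1$ and the grading shifts depend on the sign of the crossing, but neither detail affects the ensuing argument). Crucially, the connecting points of $\sigma$ can be arranged to coincide with $p$ and $q$, since these were picked on opposite sides of $c$. Let $\tau : C_{\A}(L_1) \to C_{\A}(L_0)$ be the chain map induced by the opposite saddle, with the same connecting points. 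I would take $h := \tau$ as the candidate null-homotopy.

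The null-homotopy identity then reduces to the two equalities $\tau \circ \sigma = \circled{1}_p + \circled{2}_q$ on $C_{\A}(L_0)$ and $\sigma \circ \tau = \circled{1}_p + \circled{2}_q$ on $C_{\A}(L_1)$; the chain-map conditions for $\circled{1}_p$ and $\circled{2}_q$ on the cone are automatic because $p$ and $q$ lie away from the saddle band and therefore commute with $\sigma$. Both displayed identities follow immediately from Theorem \ref{saddletheorem}, since $\tau \circ \sigma$ and $\sigma \circ \tau$ are each a ``merge followed by split'' (respectively ``split followed by merge'') cobordism whose connecting points are $\{p,q\}$, which Theorem \ref{saddletheorem} evaluates to $\circled{1}_p + \circled{2}_q$. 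Therefore $\circled{1}_p + \circled{2}_q$ is null-homotopic as a chain endomorphism of $C_{\A}(L)$ and hence vanishes on passage to $\A$-homology.

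The main obstacle I anticipate is purely bookkeeping: sorting out which of $L_0, L_1$ is the source of $\sigma$ depending on the sign of $c$, keeping track of the grading shifts in the cone, and checking that the conventions for the null-homotopy equation do not introduce an unwanted sign (one could conceivably land at $\circled{1}_p - \circled{2}_q = 0$ before reconciling signs, which would then need to be matched against the normalisation of $\circled{1}$ and $\circled{2}$ in the statement). All of the substantive algebraic and topological content, however, is already packaged in Theorem \ref{saddletheorem}, so once the local cone conventions are pinned down the argument is a short formal verification.
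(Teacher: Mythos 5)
Your proposal mirrors the paper's proof exactly: the paper likewise builds the mapping cone $C_{\A}(L_0)\xrightarrow{f}C_{\A}(L_1)$ at $c$, takes the null-homotopy $H(a_0,a_1)=(f'(a_1),0)$ with $f'$ the reverse saddle, and applies Theorem \ref{saddletheorem} to evaluate the two diagonal terms $f'f$ and $ff'$ as $\circled{1}_p+\circled{2}_q$. The one detail you elide---and correctly flag as sign bookkeeping---is the off-diagonal entry of $\delta H + H\delta$, namely $\delta(f'(a_1)) + f'(\delta(a_1))$, which the paper dispatches by observing that $\delta(f'(a_1))=-f'(\delta(a_1))$ once $c$ is taken to be the first crossing in the cube of resolutions.
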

\begin{proof}
The proof follows the idea of \cite[Lemma 2.1]{Alishahi_2019} for the most part, except that we need to pay careful attention to signs throughout and account for differences in the skein relations. Without loss of generality, let $c$ be the first crossing of $L$. Let $L_0$ and $L_1$ denote the $0$ and the $1$ resolutions respectively at the crossing $c$ of $L$. Now $L_0$ and $L_1$ can be oriented so that $L_1$ is obtained from $L_0$ by performing an oriented saddle. Let $f$ be the map on $\A$-complexes induced by the this saddle move. So the mapping cone $f:C_\A(L_0)\to C_\A(L_1)$ is $C_\A(L)$ (up to grading shift). Let $f'$ be the map on $\A$-complexes induced by the oriented saddle from $L_1$ to $L_0$. Having written $C_\A(L)$ as the mapping cone, there exists a homotopy $H(a_0,a_1):=(f'(a_1),0)$. Let $\delta$ denote the differential.

Since $c$ is the first crossing of $L$, for the cube of resolutions for $C_\A(L)$ $f$ describes the differential formed by changing the first crossing of $L$. Thus, $\delta H(a_0,a_1) + H\delta (a_0,a_1)= \delta(f'(a_1),0) +H(\delta(a_0),f(a_0)+\delta(a_1))=(\delta (f'(a_1)),f(f'(a_1)))+(f'(f(a_0))+ f'(\delta (a_1)),0)$. However, since $c$ is the first crossing of the link diagram $L$, $\delta (f'(a_1))=-f'(\delta(a_1))$. Thus, $(\delta (f'(a_1)),f(f'(a_1)))+(f'(f(a_0))+ f'(\delta (a_1)),0)=(f'(f(a_0)),f(f'(a_1)))$, which by Theorem \ref{saddletheorem} is equal to $(\circled{1}_p + \circled{2}_q)(a_0,a_1)$.
\end{proof}

\begin{theorem}
Given a link diagram $L$, and points $p$ and $q$ on either side of a crossing $c$ in $L$. In $\A$-homology, the map $\A({X_*}_p L)$ induced by placing a $*$ at the point $p$ on $L$ is equal to $\A(-{X_*}_q)$ induced by placing $-*$ at the point $q$ on $L$.
\end{theorem}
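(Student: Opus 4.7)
The plan is to unfold $X_\ast$ at each of the two points as $X_{\circled{1}} + X_{\circled{2}}$ and reduce the statement to two applications of Theorem~\ref{moving}. Explicitly, writing the induced maps as $\A(X_{\ast,p})$ and $\A(X_{\ast,q})$, we have
\[
\A(X_{\ast,p}) = \circled{1}_p + \circled{2}_p, \qquad \A(X_{\ast,q}) = \circled{1}_q + \circled{2}_q,
\]
and our target identity $\A(X_{\ast,p}) = -\A(X_{\ast,q})$ is equivalent to
\[
(\circled{1}_p + \circled{2}_q) + (\circled{2}_p + \circled{1}_q) = 0.
\]
So it suffices to establish each of the two parenthesized expressions vanishes.

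First I would invoke Theorem~\ref{moving} directly to obtain $\circled{1}_p + \circled{2}_q = 0$ (the displayed relation in that theorem, interpreted with $p,q$ on either side of the crossing). Next I would run the very same argument used in Theorem~\ref{moving} but with the roles of the two Frobenius generators $X-\alpha_1$ and $X-\alpha_2$ interchanged; this is legitimate because the Frobenius system $A_\alpha = R_\alpha[X]/((X-\alpha_1)(X-\alpha_2))$ is visibly symmetric under the involution $\alpha_1 \leftrightarrow \alpha_2$, as are the comultiplication formulas $\Delta(X-\alpha_i) = (X-\alpha_i)\otimes(X-\alpha_i)$. Applying this symmetry either inside the proof of Theorem~\ref{moving} or as a formal consequence of it yields the companion relation $\circled{2}_p + \circled{1}_q = 0$.

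Summing these two identities then gives the desired $\A(X_{\ast,p}) + \A(X_{\ast,q}) = 0$, i.e.\ $\A(X_{\ast,p}\, L) = \A(-X_{\ast,q}\, L)$. There is no serious obstacle here — the only subtle point is justifying that the label-swap really does give an independent relation rather than the same one, which is immediate from the symmetry of $(R_\alpha,A_\alpha)$ under $\alpha_1 \leftrightarrow \alpha_2$. This result is precisely what is needed to complete the proof of Theorem~\ref{indiff}, since it shows that the third movie move (pushing a $\ast$ across a crossing) alters the induced $\A$-map only by an overall sign.
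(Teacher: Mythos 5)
Your proposal is correct and is essentially the same route the paper intends: the paper's own proof is the one-line remark that the statement is a ``trivial consequence of Theorem~\ref{moving} and the fact that $X_* = X_{\circled{1}} + X_{\circled{2}}$,'' and your argument is precisely the unpacking of that remark. One small observation: the companion relation $\circled{2}_p + \circled{1}_q = 0$ need not be derived by re-running the proof of Theorem~\ref{moving} with $\alpha_1 \leftrightarrow \alpha_2$ swapped, since Theorem~\ref{saddletheorem} already records both $\circled{1}_p + \circled{2}_q$ and $\circled{2}_p + \circled{1}_q$ as equal expressions for $\A(f'\circ f)$, so the null-homotopy argument in the proof of Theorem~\ref{moving} delivers both identities at once.
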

\label{*move}
\begin{proof}
This is a trivial consequence of Theorem \ref{moving} and the fact that $X_*=X_{\circled{1}}+X_{\circled{2}}$.
\end{proof}

Theorem \ref{*move} completes the proof of Theorem \ref{indiff} stated earlier. Theorem \ref{moving} will now be used to show that in $\A$-Homology the map induced by a saddle that increases the number of components followed by the saddle in reverse is equivalent to scaling the identity map by $\pm X_*$:

\begin{theorem} \label{splitsaddle} 
Let $L$ be a link diagram obtained from the link diagram $K$ by performing an oriented saddle that increases the number of components of $K$. Let $f$ denote the $\A$ map induced by this saddle, and $f'$ be the map induced by doing the saddle in reverse. Then $f'\circ f$ induces a map that is homotopic to multiplication by $\pm X_*$, i.e. $\A(f' \circ f)=\pm \A(X_* \Id_{K})$.
\end{theorem}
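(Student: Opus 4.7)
The plan is to combine Theorem \ref{saddletheorem} (which expresses $\A(f' \circ f)$ as $\circled{1}_q + \circled{2}_{q'}$ for the connecting points $q, q'$) with iterated applications of Theorem \ref{moving} and Theorem \ref{indiff}. The key auxiliary fact is a parity check: the number of crossings along $K$ between $q$ and $q'$ is always even when $f$ is a splitting saddle.

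First, since $f$ increases the number of components, $q$ and $q'$ lie on the same component $C$ of $K$, and after the saddle $C$ splits into two loops $H_1, H_2$ of $L$. Writing $n$ for the number of crossings encountered while traversing $C$ from $q'$ to $q$ along the half-arc that closes up to $H_1$, I decompose $n = 2s + m + e$, where $s$ counts the self-crossings of that half-arc (each visited twice in a traversal), $m$ counts the crossings between the two half-arcs of $C$, and $e$ counts the crossings of the half-arc with the remaining components of $K$. The linking numbers $\mathrm{lk}(H_1, H_2)$ and $\mathrm{lk}(H_1, C_j)$ (for each other component $C_j$ of $L$) are integers, which forces the signed (and therefore the unsigned) between-component crossing counts to be even. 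Hence $m$ and $e$ are each even, and so $n$ is even.

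With $n$ even in hand, I would iterate Theorem \ref{moving} to slide $\circled{2}_{q'}$ along $C$ until it sits adjacent to $q$: each crossing traversed exchanges $\circled{2}$ with $-\circled{1}$, so an even number of traversals yields $\circled{2}_{q'} = \circled{2}_q$ on $\A$-homology. Therefore $\A(f' \circ f) = \circled{1}_q + \circled{2}_q = X_{*q}$, and Theorem \ref{indiff} identifies $X_{*q}$ with $\pm X_*$ placed at any reference point on the cobordism, giving $\A(f' \circ f) = \pm \A(X_* \Id_K)$. The main obstacle I anticipate is setting up the parity argument cleanly; in particular one must observe that the saddle is performed in a small local region, so it creates no new crossings of the half-arcs with other components, and thus the integer-valued linking arguments (phrased in $L$) apply directly to the crossing count in the original diagram $K$.
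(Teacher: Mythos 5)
Your proposal is correct and follows essentially the same route as the paper: express $\A(f'\circ f)$ via Theorem \ref{saddletheorem} as a sum of opposite digits at the attaching points, slide one digit to the other along an arc of $K$ using Theorem \ref{moving}, and observe that the even number of crossings makes the sign cancel, yielding ${X_*}_q$ and then $\pm\A(X_*\Id_K)$. The one place you go further than the paper is the parity claim: the paper simply asserts that the arc passes an even number of crossings (deferring to the idea of \cite[Corollary 2.3]{Alishahi_2019}), whereas you supply a complete justification via the decomposition $n=2s+m+e$ and the integrality of $\mathrm{lk}(H_1,H_2)$ and $\mathrm{lk}(H_1,C_j)$; this is a correct and welcome filling-in of that detail.
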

\begin{proof}
The proof follows the idea of \cite[Corollary 2.3]{Alishahi_2019} in the $\BN$ setting, with a minor change due to differences in the skein relations. Let $p$ and $q$ be the attaching points on the diagram $K$ for the oriented saddle move. Then by Theorem \ref{saddletheorem}, $\A(f' \circ f)$ is given by $\circled{1}_p + \circled{2}_q$. Next we note that $p$ and $q$ lie on the same connected component of the link whose link diagram is $K$. So let $a$ be an arc from $p$ to $q$ on the link diagram $K$. This arc must pass an even number of crossings, so by virtue of Theorem \ref{moving}, $\circled{1}_p=\circled{1}_q$. Thus $\A(f' \circ f)=\circled{1}_p + \circled{2}_q=\circled{1}_q + \circled{2}_q= {X_*}_q$. So by Theorem \ref{*move}, $\A(f' \circ f)=\pm \A(X_*\Id_K)$. 
\end{proof}

Just as we showed for $\BN$ homology in Lemma \ref{mirror}, $\A$-homology also satisfies a Symmetry lemma as we will now show. But in order to describe this statement, it will be useful to define the following:

\begin{definition}
    We say that a movie $M$ for a $4$-dimensional cobordism is \emph{digited}, if it has one or more $\circled{1}$'s or $\circled{2}$'s located on some frame(s) of the movie. In this context, the $\circled{1}$'s or $\circled{2}$'s will be called the \emph{digits} of $M$.
\end{definition}

In what follows, it will often be useful to refer to the digit $\circled{1}$ as the \emph{opposite digit} of $\circled{2}$ and vice versa. 

\begin{lemma} (Symmetry Lemma)
\label{mirror1}
Suppose that $C$ is a singly digited movie for a connected $4$-dimensional cobordism. Let $T$ be the same movie as $C$, but without any digits. Suppose that there exists a vertical line passing through the center of $T$ about which the movie $T$ exhibits mirror symmetry. Then the digited movie $r(C)$ formed by reflecting the location of the digit in $C$ induces the same map in $\A$-homology as the map induced by $C$.
\end{lemma}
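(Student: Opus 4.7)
My plan closely parallels the proof of Lemma \ref{mirror}, with the essential modification that in $\A$-homology, sliding a digit past a crossing both introduces a sign and swaps $\circled{1} \leftrightarrow \circled{2}$ (Theorem \ref{moving}), rather than merely paying an $h$-cost as in $\BN$-homology. Parametrise $C$ so that it ranges from $t=0$ to $t=1$ and $T$ is mirror-symmetric about the frame at $t=1/2$; say the unique digit in $C$ is $\circled{j}$ placed at location $(\tau, q)$ with $\tau < 1/2$, so that $r(C)$ carries $\circled{j}$ at $(1-\tau, q)$.

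By \cite[Lemma 2.1]{Sarkar_2020}, two elementary moves suffice to relocate the digit inside the movie: (a) commuting the digit past an elementary cobordism occurring away from it, which does not change the induced $\A$-map; and (b) sliding the digit past a crossing in a frame diagram, which by Theorem \ref{moving} converts $\circled{j}$ on one side of the crossing into $-\circled{3-j}$ on the other side. It therefore suffices to exhibit a chain of such moves that transports $\circled{j}$ from $(\tau, q)$ to $(1-\tau, q)$ with net effect $+\circled{j}$.

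Since the underlying surface of $T$ is connected, I choose a path of frame-locations from $(\tau, q)$ through some point $(1/2, q')$ in the midframe to $(1-\tau, q)$. I realise the first half of this path as a sequence of moves of types (a) and (b), and let $n$ denote the number of type-(b) moves used; at $(1/2, q')$ the digit is then $(-1)^n \circled{j'}$ with $j' = j$ if $n$ is even and $j' = 3-j$ if $n$ is odd. I then take the second half of the path to be the $t \mapsto 1-t$ mirror of the first half; by the symmetry hypothesis on $T$, this is again an admissible sequence of moves containing exactly $n$ type-(b) crossings. Composing the two halves, the accumulated sign is $(-1)^{2n}=+1$ and the digit has been swapped $2n$ times, so it returns to $\circled{j}$. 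Hence $\A(C) = \A(r(C))$.

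The hard part will be verifying that the mirrored second leg is genuinely an admissible sequence of moves: the mirror symmetry of $T$ must give a precise bijection between crossings of corresponding frames on either side of the midframe, so that Theorem \ref{moving} applies to each mirrored type-(b) move with exactly the same sign/type rule. This is a local consequence of the mirror symmetry hypothesis and is directly analogous to the corresponding step implicit in the proof of Lemma \ref{mirror}.
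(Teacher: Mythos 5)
Your proposal is correct and takes essentially the same approach as the paper: parametrise so the mirror plane sits at $t=1/2$, use Sarkar's Lemma 2.1 to reduce to the two local moves, route the digit through the midframe, and mirror the first half of the route to force the number of crossing-passes to be even, so the accumulated $(-1)$'s and $\circled{1}\leftrightarrow\circled{2}$ swaps both cancel. Your explicit bookkeeping of the sign $(-1)^{2n}$ and the digit parity $j' = j$ or $3-j$ is a slightly more detailed version of the same argument, and the "hard part" you flag (that the mirrored sequence of moves is admissible) is handled in the paper exactly by the mirror-symmetry hypothesis on $T$, as you anticipate.
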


\begin{proof}
The proof follows a similar idea to the proof of Theorem \ref{mirror} in the $\BN$ setting. Let us assume that the movie $C$ ranges from time ($t$) $0$ to $1$, with $T$ exhibiting mirror symmetry about $t=0.5$. Suppose that the digit is located at a point $q$ in the frame (link diagram) of $C$ at some time $\tau$, where $\tau < 0.5$. Thus the location of the digit in $C$ is described by the ordered pair $(\tau, q)$, where $\tau$ describes the time of the frame in which the digit is located, and $q$ specifies the location of the digit in that frame. Then in the movie $r(C)$, the digit has been reflected to the same point $q$ but this time located in the frame of $T$ at time $1-\tau$. For convenience, we will refer to the location of the digit in $r(C)$ by the ordered pair $(1-\tau,q)$. 

Because $C$ is a movie for a connected $4$-dimensional cobordism, there exists a way to move the digit from the original location $(\tau, q)$ to $(1-\tau,q)$ using one of two kinds of moves: 

1. making a digit commute with an elementary cobordism happening away from a digit

2. moving a digit past a crossing in a link diagram in a frame of the movie  

That these two kinds of moves suffice is implied by \cite[Lemma 2.1]{Sarkar_2020}. Applying move 1 does not affect the induced $\A$-homology map (this is easy to see). However, move 2 affects the induced $\A$-homology map in the following way: the digit that was moved by virtue of move 2 now switches to the additive inverse of the opposite digit. For example, if $\circled{1}$ was moved past a crossing by virtue of move 2, then it switches to $-\circled{2}$. This is due to Theorem \ref{moving}.

Now we will show that there exists a way to move the digit from $(\tau, q)$ to $(1-\tau,q)$ such that the number of applications of move 2 is an even number. We note that any way to move the digit from $(\tau, q)$ to $(1-\tau,q)$ must pass a point $q'$ in the $t=0.5$ frame. However, since $T$ is mirror symmetric about the $t=0.5$ frame, every application of moves 1 and 2 that went into moving the digit from $(\tau, q)$ to $(0.5, q')$ can be mirrored when moving the digit from $(0.5, q')$ to $(1- \tau, q)$. Thus the total number of applications of move 2 to take it from $(\tau, q)$ to $(1-\tau,q)$ in this manner is an even number. Thus the induced $\A$-homology map is unchanged when taking the digit from $(\tau, q)$ to $(1-\tau,q)$.

\end{proof}

Now we are in a position to prove Lemma \ref{main1} and Theorem \ref{side1}.

\begin{proof} Let $C':K'\to K$ be a ribbon concordance of knots with $d$ saddles, and let $C$ be its opposite cobordism. The proof will follow a similar idea as the proof of Lemma \ref{main}, which in turn is built on ideas of \cite [Main Theorem: Methods 1 and 2]{Sarkar_2020} with modifications to accommodate for differences in the skein relations. A priori $C$ decomposes into a movie that proceeds as follows:

$C_1$) Reidemeister moves and planar isotopies

$C_2$) $d$ saddles

$C_3$) Reidemeister moves and planar isotopies

$C_4$) $d$ deaths 

$C_5$) Reidemeister moves and planar isotopies

However, due to link invariance of $\A$-homology, it suffices to assume that $C$ is given by a movie described entirely by $C_4\circ C_3\circ C_2$. Similarly, $C'$ is described by $C_2' \circ C_3' \circ C_4'$, where $C_i'$ denotes doing $C_i$ in the reverse direction. Let $D$ be the cobordism formed by first doing $C$ and then $C'$, but that skips the deaths and the births in between. In other words, $D:=C_2'\circ C_3'\circ C_3\circ C_2$. Starting from $K$ and doing $C_3\circ C_2$ results in the link $K' \bigsqcup U^{d}$, where $U^{d}$ denotes $d$ copies of the unknot. Drawing on a trick from \cite{Sarkar_2020}, we may express $D$ as $C_2'\circ C_3'\circ \Id_{K'\bigsqcup U^{d}} \circ C_3\circ C_2$. 

Now we make use of the neck-cutting relation in $\A$-Homology shown in Figure \ref{skein1}. Even though this differs from the neck-cutting relation in Lee Homology that \cite{Sarkar_2020} appeals to, due to the Symmetry Lemma this will prove to be just as effective as we will now show: order the $d$ necks present in the $\Id_{K'\bigsqcup U^{d}}$ cobordism from $1$ to $d$. Let $R_i$ denote the cobordism formed after cutting the first $i$ necks present in the $\Id_{K'\bigsqcup U^{d}}$ cobordism using the neck-cutting relation shown in Figure \ref{skein1}. Let $S_i:=C_2'\circ C_3'\circ R_i\circ C_3\circ C_2$. Thus, $\A(S_d)= \A (D)$. Now note that $S_i$ is a sum of digited cobordisms each with the same underlying surface. Let $T_i$ denote this common surface underlying the cobordisms in the sum $S_i$. Thus, $T_d=C'\circ C$.

$R_1$ is the cobordism formed after cutting the $1$st neck. So $S_1=C_2'\circ C_3'\circ R_1\circ C_3\circ C_2$ is a sum of two digited cobordisms $U_1$ and $V_1$, such that the surface $T_1$ underlying both $U_1$ and $V_1$ is the same. However, the digits appearing in $U_1$ and $V_1$ are opposites of each other, and they are mirror reflected about a vertical plane passing through the center of the underlying surface $T_1$. So by the Symmetry Lemma, $\A(U_1+V_1)=\pm \A(X_*T_1)$. Thus, $\A(S_1)=\pm \A(X_*T_1)$. Then we can cut the $2$nd neck and an identical argument shows that $\A(S_2)=\pm \A(X_*^2 T_2)$. It follows that $\A(S_i)=\pm \A(X_*^{i}T_i)$ for all $i\in \{1,...,d\}$. Thus, $\A(D)=\A(S_d)=\pm \A(X_*^d T_d)=\pm \A(X_*^d C' \circ C)$. This can be diagrammatically depicted as shown in Figure \ref{D1}, where for the purposes of the diagram, $d$ has been taken to be $1$.

However, note that $\A(D)=\A(C_2'\circ C_2)$, since $C_3'\circ C_3$ is isotopic to the identity cobordism. $C_2'\circ C_2$ first does $d$ saddles, each of which increases the number of components, and then immediately does the $d$ saddles in reverse. Now by applying Theorem \ref{splitsaddle} $d$-many times, we conclude that $\A(D)$ is simply given by $\pm \A(X_*^d \Id_{K})$. This can be diagrammatically depicted as shown in Figure \ref{alsoD1}, where for the purposes of the diagram, $d$ has been taken to be $1$. Thus $\A(X_*^d C'\circ C)=\pm X_*^d\Id_{\A(K)}$, completing the proof of Lemma \ref{main1}.

Further, due to Theorem \ref{inj1}, the image of $\A(X_*^d(C'\circ C))$ is isomorphic to $\pm X_*^d\A(K')$. However, the image of $\A(D)$ is equal to that of $\A(X_*^d \Id_{K})$, which is equal to $X_*^d\A(K)$. Thus $X_*^d\A(K')$ is isomorphic to $\pm X_*^d\A(K)$, completing the proof of Theorem \ref{side1} for the special case when there exists a ribbon concordance between $K$ and $K'$ with at most $d$ saddles. The general case of Theorem \ref{side1} when the ribbon distance between $K$ and $K'$ is $d$, follows trivially from this special case. 
\end{proof}

\begin{figure}
    \centering
    \[
    \vcenter{\hbox{
        \begin{tikzpicture}[rotate=90,yscale=0.3]
          \draw[knot] (0,9) circle (1);
          \node[anchor=north] at (-1,9) {$K$};
          
          \draw[knot] (-1,6) arc (-180:0:0.5);
          \draw[knot] (0.2,6) arc (-180:0:0.4);
          \node[anchor=north] at (-1,6) {$\wt{K}$};
          
          \draw[knot] (-1.5,3) arc (-180:0:0.8);
          \draw[knot] (0.8,3) arc (-180:0:0.3);
          \node[anchor=north] at (-1.5,3) {$K'$};
          \node[anchor=north] at (0.8,3) {$U$};

          \draw[knot] (-1.5,0) arc (-180:0:0.8);
          \draw[knot] (0.8,0) arc (-180:0:0.3);
          \node[anchor=north] at (-1.5,0) {$K'$};
          \node[anchor=north] at (0.8,0) {$U$};
          
          \draw[knot] (-1,-3) arc (-180:0:0.5);
          \draw[knot] (0.2,-3) arc (-180:0:0.4);
          \node[anchor=north] at (-1,-3) {$\wt{K}$};
          
          \draw[knot] (-1,-6) arc (-180:0:1);
          \node[anchor=north] at (-1,-6) {$K$};

          \draw (-1,9) -- (-1,6);
          \draw (0,6) to[looseness=10,out=90,in=90] (0.2,6);
          \draw (1,9) -- (1,6);
          \node at (-0.5,7) {$C_2$};
          
          \draw (-1,6) to[looseness=1,out=-90,in=90] (-1.5,3);
          \draw (0,6) to[looseness=1,out=-90,in=90] (0.1,3);
          \draw (0.2,6) to[looseness=1,out=-90,in=90] (0.8,3);
          \draw (1,6) to[looseness=1,out=-90,in=90] (1.4,3);
          \node at (-0.5,4) {$C_3$};
          
          \draw (-1.5,3) -- (-1.5,0);
          \draw (0.1,3) -- (0.1,0);
          \draw (0.8,3) -- (0.8,0);
          \draw (1.4,3) -- (1.4,0);
          \node at (-0.5,1) {$\Id$};

          \draw (-1,-3) to[looseness=1,out=90,in=-90] (-1.5,0);
          \draw (0,-3) to[looseness=1,out=90,in=-90] (0.1,0);
          \draw (0.2,-3) to[looseness=1,out=90,in=-90] (0.8,0);
          \draw (1,-3) to[looseness=1,out=90,in=-90] (1.4,0);
          \node at (-0.5,-2) {$C'_3$};
          
          \draw (-1,-6) -- (-1,-3);
          \draw (0,-3) to[looseness=10,out=-90,in=-90] (0.2,-3);
          \draw (1,-6) -- (1,-3);
          \node at (-0.5,-5) {$C'_2$};
        \end{tikzpicture}}}
    =\pm 
    \vcenter{\hbox{
        \begin{tikzpicture}[rotate=90,yscale=0.3]
          \draw[knot] (0,15) circle (1);
          \node[anchor=north] at (-1,15) {$K$};
          
          \draw[knot] (-1,12) arc (-180:0:0.5);
          \draw[knot] (0.2,12) arc (-180:0:0.4);
          \node[anchor=north] at (-1,12) {$\wt{K}$};
          
          \draw[knot] (-1.5,9) arc (-180:0:0.8);
          \draw[knot] (0.8,9) arc (-180:0:0.3);
          \node[anchor=north] at (-1.5,9) {$K'$};
          \node[anchor=north] at (0.8,9) {$U$};

          \draw[knot] (-1.5,6) arc (-180:0:0.8);
          \node[anchor=north] at (-1.5,6) {$K'$};

          \draw[knot] (-1.5,3) arc (-180:0:0.8);
          \node[anchor=north] at (-1.5,3) {$K'$};

          \draw[knot] (-1.5,0) arc (-180:0:0.8);
          \draw[knot] (0.8,0) arc (-180:0:0.3);
          \node[anchor=north] at (-1.5,0) {$K'$};
          \node[anchor=north] at (0.8,0) {$U$};
          
          \draw[knot] (-1,-3) arc (-180:0:0.5);
          \draw[knot] (0.2,-3) arc (-180:0:0.4);
          \node[anchor=north] at (-1,-3) {$\wt{K}$};
          
          \draw[knot] (-1,-6) arc (-180:0:1);
          \node[anchor=north] at (-1,-6) {$K$};

          \draw (-1,15) -- (-1,12);
          \draw (0,12) to[looseness=10,out=90,in=90] (0.2,12);
          \draw (1,15) -- (1,12);
          \node at (-0.5,13) {$C_2$};
          
          \draw (-1,12) to[looseness=1,out=-90,in=90] (-1.5,9);
          \draw (0,12) to[looseness=1,out=-90,in=90] (0.1,9);
          \draw (0.2,12) to[looseness=1,out=-90,in=90] (0.8,9);
          \draw (1,12) to[looseness=1,out=-90,in=90] (1.4,9);
          \node at (-0.5,10) {$C_3$};
          
          \draw (-1.5,9) -- (-1.5,6);
          \draw (0.1,9) -- (0.1,6);
          \draw (0.8,9) to[looseness=10,out=-90,in=-90] (1.4,9);
          \node at (-0.5,7) {$C_4$};

          \draw (-1.5,6) -- (-1.5,3);
          \draw (0.1,6) -- (0.1,3);
          \node at (-0.5,4) {$\Id_{K'}$};
          \node at (-1,3.6) {*};

          \draw (-1.5,3) -- (-1.5,0);
          \draw (0.1,3) -- (0.1,0);
          \draw (0.8,0) to[looseness=10,out=90,in=90] (1.4,0);
          \node at (-0.5,1) {$C'_4$};

          \draw (-1,-3) to[looseness=1,out=90,in=-90] (-1.5,0);
          \draw (0,-3) to[looseness=1,out=90,in=-90] (0.1,0);
          \draw (0.2,-3) to[looseness=1,out=90,in=-90] (0.8,0);
          \draw (1,-3) to[looseness=1,out=90,in=-90] (1.4,0);
          \node at (-0.5,-2) {$C'_3$};
          
          \draw (-1,-6) -- (-1,-3);
          \draw (0,-3) to[looseness=10,out=-90,in=-90] (0.2,-3);
          \draw (1,-6) -- (1,-3);
          \node at (-0.5,-5) {$C'_2$};
        \end{tikzpicture}}}
    \qedhere
    \]    
    \caption{The following equality of cobordisms holds when one passes to $\A$-Homology, owing to the neck-cutting relation in $\A$-Homology and the Symmetry Lemma in $\A$-homology. Figure modified from \cite[page 10]{Sarkar_2020}}
    \label{D1}
\end{figure}

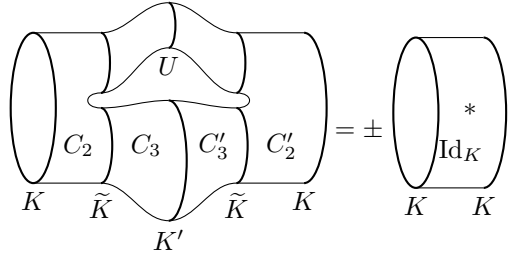
\begin{figure}
    \centering
     \[
    \vcenter{\hbox{
        \begin{tikzpicture}[rotate=90,yscale=0.3]
          \draw[knot] (0,6) circle (1);
          \node[anchor=north] at (-1,6) {$K$};
          
          \draw[knot] (-1,3) arc (-180:0:0.5);
          \draw[knot] (0.2,3) arc (-180:0:0.4);
          \node[anchor=north] at (-1,3) {$\wt{K}$};
          
          \draw[knot] (-1.5,0) arc (-180:0:0.8);
          \draw[knot] (0.8,0) arc (-180:0:0.3);
          \node[anchor=north] at (-1.5,0) {$K'$};
          \node[anchor=north] at (0.8,0) {$U$};
          
          \draw[knot] (-1,-3) arc (-180:0:0.5);
          \draw[knot] (0.2,-3) arc (-180:0:0.4);
          \node[anchor=north] at (-1,-3) {$\wt{K}$};
          
          \draw[knot] (-1,-6) arc (-180:0:1);
          \node[anchor=north] at (-1,-6) {$K$};

          \draw (-1,6) -- (-1,3);
          \draw (0,3) to[looseness=10,out=90,in=90] (0.2,3);
          \draw (1,6) -- (1,3);
          \node at (-0.5,4) {$C_2$};
          
          \draw (-1,3) to[looseness=1,out=-90,in=90] (-1.5,0);
          \draw (0,3) to[looseness=1,out=-90,in=90] (0.1,0);
          \draw (0.2,3) to[looseness=1,out=-90,in=90] (0.8,0);
          \draw (1,3) to[looseness=1,out=-90,in=90] (1.4,0);
          \node at (-0.5,1) {$C_3$};
          
          \draw (-1,-3) to[looseness=1,out=90,in=-90] (-1.5,0);
          \draw (0,-3) to[looseness=1,out=90,in=-90] (0.1,0);
          \draw (0.2,-3) to[looseness=1,out=90,in=-90] (0.8,0);
          \draw (1,-3) to[looseness=1,out=90,in=-90] (1.4,0);
          \node at (-0.5,-2) {$C'_3$};
          
          \draw (-1,-6) -- (-1,-3);
          \draw (0,-3) to[looseness=10,out=-90,in=-90] (0.2,-3);
          \draw (1,-6) -- (1,-3);
          \node at (-0.5,-5) {$C'_2$};
        \end{tikzpicture}}}
    = \pm \vcenter{\hbox{
        \begin{tikzpicture}[rotate=90,yscale=0.3]
          \draw[knot] (0,3) circle (1);
          \node[anchor=north] at (-1,3) {$K$};
          
          \draw[knot] (-1,0) arc (-180:0:1);
          \node[anchor=north] at (-1,0) {$K$};

          \draw (-1,0) -- (-1,3);
          \draw (1,0) -- (1,3);
          \node at (-0.5,1) {$\Id_{K}$};
          \node at (0,0.6) {*};
        \end{tikzpicture}}}
    \]
    \caption{The following equality of cobordisms holds when one passes to $\A$-Homology, owing to Theorem \ref{splitsaddle}. Figure modified from \cite[page 9]{Sarkar_2020}.}
    \label{alsoD1}
\end{figure}


\begin{theorem}
\label{chain1}
If $d$ is the ribbon distance between two knots that are represented by knot diagrams $M_0$ and $M_n$, then there exist chain maps between the $\A$-complexes $u:C_{\A}(M_0)\to C_{\A}(M_n)$ and $v:C_{\A}(M_0)\to C_{\A}(M_n)$, such that both $u\circ v$ and $v\circ u$ induce multiplication by $\pm X_*^d$ in $\A$-Homology.
\end{theorem}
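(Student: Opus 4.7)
The approach is to mirror the proof of Theorem \ref{chain} with the $\BN$ ingredients replaced by their $\A$-homology counterparts. Fix a chain of knot diagrams $M_0,\ldots,M_n$ realising the ribbon distance, together with movies $C_i$ of ribbon concordances (in some direction) between consecutive $M_i$ and $M_{i+1}$, each with at most $d$ saddles. Let $C'_i$ be the opposite movie of $C_i$, and define $D_i:M_i\to M_{i+1}$ to equal whichever of $C_i,C'_i$ points from $M_i$ to $M_{i+1}$, with $E_i:M_{i+1}\to M_i$ being its reverse. Set $f=D_{n-1}\circ\cdots\circ D_0$ and $g=E_0\circ\cdots\circ E_{n-1}$, and let $f'$ denote $X_*^d f$, that is, the movie $f$ with $d$ stars inserted somewhere along the way. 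Take $u$ to be the chain map on $\A$-complexes induced by $f'$ and $v$ the chain map induced by $g$.

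To check that $v\circ u$ induces $\pm X_*^d\,\Id$ on $\A$-homology, I would argue outward from the middle of the composition. By Theorem \ref{indiff}, the $d$ copies of $*$ may be relocated along the connected composed cobordism up to an overall sign, so placing them on the innermost piece it suffices to analyse $X_*^d\,E_{n-1}\circ D_{n-1}:M_{n-1}\to M_{n-1}$. If $D_{n-1}$ is a ribbon concordance and $E_{n-1}$ its opposite, Theorem \ref{inj1} gives $\A(E_{n-1}\circ D_{n-1})=\Id_{\A(M_{n-1})}$, hence $\A(X_*^d\,E_{n-1}\circ D_{n-1})=\pm X_*^d\,\Id$. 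Otherwise $D_{n-1}$ is the opposite of a ribbon concordance and $E_{n-1}$ is the ribbon concordance itself, in which case Lemma \ref{main1} directly gives $\A(X_*^d\,E_{n-1}\circ D_{n-1})=\pm\A(X_*^d\,\Id_{M_{n-1}})$. Wrapping with $D_{n-2},E_{n-2}$ and repeating the same dichotomy inductively reduces the problem at each stage to the analogous question one level lower, and after $n$ steps delivers the claim for $v\circ u$; the argument for $u\circ v$ is entirely symmetric, moving the $*$'s to the central piece via Theorem \ref{indiff} and running the same induction.

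The main obstacle will be bookkeeping for signs, since both Theorem \ref{indiff} and Lemma \ref{main1} assert their equalities only up to an overall sign and these signs compound through the induction. The saving grace is that the statement is phrased with $\pm X_*^d$, so this ambiguity is harmless; one only needs to verify that each application of Theorem \ref{indiff} is performed on a connected cobordism, which is automatic because a ribbon concordance is connected and so are all the composed cobordisms arising in the argument. Beyond this sign-tracking, the proof is a line-by-line translation of the $\BN$ argument given for Theorem \ref{chain}.
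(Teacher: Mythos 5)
Your proposal is correct and follows the paper's proof of Theorem \ref{chain1} essentially line by line: same setup with the chain $M_0,\ldots,M_n$, same definitions of $f$, $g$, $f'$, $u$, $v$, the same middle-outward induction, and the same two-case analysis invoking Theorem \ref{inj1} and Lemma \ref{main1}. Your extra remarks about sign bookkeeping and the connectedness hypothesis in Theorem \ref{indiff} are sound and in fact slightly more explicit than the paper's own exposition.
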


\begin{proof}

Suppose that $d$ is the ribbon distance between the knots represented by $M_0$ and $M_n$. So there exists a sequence of knot diagrams $M_0,...,M_n$ such that $M_i$ is related to $M_{i+1}$ via a movie for a ribbon concordance $C_i:M_i\to M_{i+1}$, or $C_i:M_{i+1}\to M_i$ with at most $d$ saddles. If $C_i$ denotes the movie for a ribbon concordance, its opposite movie will be called $C'_i$. Thus we have a movie $f=D_{n-1}\circ D_{n-2}\circ...\circ D_0$, where $D_j:M_j\to M_{j+1}$ takes on the value $C_j$ or $C'_j$ for each $j$. Similarly, we have a movie $g=E_0 \circ ...\circ E_{n-2}\circ E_{n-1}$, where $E_j:M_{j+1}\to M_{j}$ takes on the value $C_j$ or $C'_j$ for each $j$. Thus $f:M_0\to M_n$ and $g:M_n\to M_0$. Let $f'$ be the movie $X_*^d f$ for some choice of location for the $*$'s anywhere in the movie $f$. 

Let $u:C_{\A}(M_0)\to C_{\A}(M_n)$ be the chain map induced by $f'$ between the $\A$-complexes of $M_0$ and $M_n$ and let $v:C_{\A}(M_n)\to C_{\A}(M_0)$ be the chain map induced by $g$ between the $\A$-complexes of $M_n$ and $M_0$. Now let $F:\A(M_0)\to \A(M_n)$ be the map induced by $f'$ on $\A$-Homology. Let $G:\A(M_n)\to \A(M_0)$ be the map induced by $g$ on $\A$-Homology. We wish to show that $FG$ and $GF$ are multiplication by $\pm X_*^d$. It suffices to prove the statement for $FG$. We will start from the middle of the movie. That is, we will first consider the part of $FG$ that is induced by the movie $M_{n-1}\to M_{n-1}$ given by $X_*^d E_{n-1}\circ D_{n-1}$, for some choice of location for the $*$'s.

\textbf{Case 1:} $D_{n-1}$ is the movie for a ribbon concordance, and $E_{n-1}$ is its opposite movie. In this case, $E_{n-1}\circ D_{n-1}$ induces the identity on $\A(M_{n-1})$ by Theorem \ref{inj1}. Thus, $X_*^d E_{n-1}\circ D_{n-1}$ induces $\pm X_*^d \Id_{\A(M_{n-1})}$.

\textbf{Case 2:} $D_{n-1}$ is the opposite of a movie for a ribbon concordance, and $E_{n-1}$ is the movie for a ribbon concordance. Then by Lemma \ref{main1} $X_*^d E_{n-1}\circ D_{n-1}$ induces $\pm X_*^d \Id_{\A(M_{n-1})}$ 

With this done, we will next show that $\pm X_*^d E_{n-2}\circ E_{n-1} \circ D_{n-1}\circ D_{n-2}$ induces $\pm X_*^d \Id_{\A(M_{n-2})}$. Since we already know that $X_*^d E_{n-1}\circ D_{n-1}$ induces $\pm X_*^d \Id_{\A(M_{n-1})}$, it suffices to show that $X_*^d E_{n-2}\circ D_{n-2}$ induces $\pm X_*^d \Id_{\A(M_{n-2})}$, which once again follows from the identical sort of casework shown above. Thus it is clear that $FG$ is given by $\pm X_*^d \Id$, as desired.
\end{proof}

Now we will need one last fact in order to prove Theorem \ref{bound1}. This is analogous to Lemma \ref{ext} from the $\BN$ setting, along with an additional connectedness condition.

\begin{lemma}
\label{ext1}
Let $L$ and $L'$ be knot diagrams. Suppose that there exist chain maps between the $\A$-complexes $u:C_{\A}(L)\to C_{\A}(L')$ and $v:C_{\A}(L')\to C_{\A}(L)$ such that: 

1. (Connectedness condition) $u$ and $v$ are induced by movies (potentially with some $*$'s) representing connected $4$-dimensional cobordisms (potentially with some $*$'s), and

2. $u\circ v$ and $v \circ u$ induce multiplication by $\pm X_*^n$ on $\A$-homology. 

Then $|\nu(L)-\nu(L')|\leq n$.
\end{lemma}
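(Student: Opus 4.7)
The plan is to imitate the pigeonhole argument behind Lemma~\ref{ext} and Alishahi's Lemma~3.1 in \cite{Alishahi_2019}, with the additional input that $u$ and $v$ commute with multiplication by $X_*$ up to an overall sign. To set this up, I would first invoke the connectedness hypothesis together with Theorem~\ref{indiff}: if $u$ is induced by a movie $M_u$ for a connected cobordism (possibly already carrying some $*$'s), then the composite ``apply $X_*$ to $L$, then apply $M_u$'' and the composite ``apply $M_u$, then apply $X_*$ to $L'$'' differ only in the location of a single $*$ on the same underlying connected cobordism, so they induce the same map up to sign. This yields $u \circ X_* = \pm X_* \circ u$ on chain maps, and hence the analogous identity on $\A$-homology; the same holds for $v$.

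With commutation in hand, I would run the standard torsion comparison. Let $m = \nu(L)$ and pick $\gamma \in \A(L)$ with $X_*^m \gamma = 0$ but $X_*^{m-1}\gamma \neq 0$. Then
\[
X_*^m u(\gamma) = \pm u(X_*^m \gamma) = 0,
\]
so $u(\gamma)$ has $X_*$-torsion order at most $m$. If moreover $m > n$, then
\[
X_*^{m-n-1}\, v(u(\gamma)) = \pm X_*^{m-n-1} X_*^n \gamma = \pm X_*^{m-1}\gamma \neq 0,
\]
which forces $X_*^{m-n-1} u(\gamma) \neq 0$. Hence $u(\gamma)$ has $X_*$-torsion order at least $m - n$, so $\nu(L') \geq \nu(L) - n$. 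Swapping the roles of $(L,u)$ and $(L',v)$ gives $\nu(L) \geq \nu(L') - n$, and therefore $|\nu(L) - \nu(L')| \leq n$.

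The main obstacle I anticipate is the first step. The connectedness condition is genuinely needed: on a disconnected cobordism, a $*$ on one component cannot be migrated to a different component without potentially changing the induced map, since Theorem~\ref{indiff} only permits moving $*$'s while preserving the number on each component. Without this, $u$ and $v$ need not interact with $X_*$ in any useful way. Once the commutation up to sign is established, the rest of the proof is a direct transcription of the $\BN$-setting argument, with the observation that the signs arising from commutation are irrelevant for the question of whether a homology class is annihilated by a power of $X_*$.
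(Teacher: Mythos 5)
Your proposal is correct and follows essentially the same path as the paper: both use the connectedness hypothesis together with Theorem~\ref{indiff} to commute $X_*$ past $u$ and $v$ up to sign on $\A$-homology, and then run the torsion-order comparison of \cite[Lemma~3.1]{Alishahi_2019}. One small imprecision: Theorem~\ref{indiff} gives $u \circ X_* = \pm X_* \circ u$ only at the level of induced maps on $\A$-homology rather than as an identity of chain maps, but that is all your argument actually uses.
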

\begin{proof}
This argument essentially follows the ideas of \cite[Lemma 3.1]{Alishahi_2019} along with an additional connectedness condition. Let $U$ denote the map induced by $u$ on $\A$-homology, and $V$ denote the map induced by $v$ on $\A$-homology. Suppose that $\gamma \in \A(L)$ is $X_*$-torsion. Then $U(\gamma) \in \A(L')$ is also $X_*$-torsion because $U$ is induced by a movie for a connected cobordism, which (by Theorem \ref{indiff}) allows $*$'s to move freely on this movie without affecting the induced $\A$-homology map up to an overall sign. Once again making use of the connectedness condition (and Theorem \ref{indiff}) and the fact that $V\circ U=\pm X_*^n$, we get that order of $X_*^n \gamma \leq $ the order of $U(\gamma)$. Similarly, the order of $U(\gamma) \leq $ the order of $\gamma$ using the connectedness condition (and Theorem \ref{indiff}). From this it follows that the order of $\gamma \leq$ the order of $U(\gamma)$ $+ n$. Thus, $\nu(L)\leq \nu(L') + n$. In this fashion it also follows that $\nu(L')\leq \nu(L)+n$. 
\end{proof}

Thus due to Theorem \ref{chain1} and Lemma \ref{ext} we have proven Theorem \ref{bound1}.

\bibliographystyle{amsalpha}
\bibliography{bibliography}

\end{document}